\title{Waiter-Client and Client-Waiter Hamiltonicity games on random graphs\footnote{\copyright 2017. This manuscript version is made available under the CC-BY-NC-ND 4.0 license http://creativecommons.org/licenses/by-nc-nd/4.0/}}
\author{
Dan Hefetz
\thanks{Department of Computer Science, Ariel University, Ariel 40700, Israel. Email: danhe@ariel.ac.il
} 
\and Michael
Krivelevich \thanks{School of Mathematical Sciences, Raymond and
Beverly Sackler Faculty of Exact Sciences, Tel Aviv University,
6997801, Israel. Email: krivelev@post.tau.ac.il. Research supported in
part by USA-Israel BSF Grant 2014361 and by grant 912/12 from the
Israel Science Foundation.}
\and Wei En Tan
\thanks{School of Mathematics, University of Birmingham, Edgbaston, Birmingham B15 2TT, United
Kingdom. Email: WET916@bham.ac.uk.} 
}
\newtheorem{theorem}{Theorem} [section]
\newtheorem{lemma}[theorem]{Lemma}
\newtheorem{proposition}[theorem]{Proposition}
\newtheorem{remark}[theorem]{Remark}
\newtheorem{definition}[theorem]{Definition}
\newtheorem{corollary}[theorem]{Corollary}
\begin{document}

\maketitle

\begin{abstract}
We study two types of two player, perfect information games with no chance moves, played on the edge set of the binomial random graph ${\mathcal G}(n,p)$. In each round of the $(1 : q)$ Waiter-Client Hamiltonicity game, the first player, called Waiter, offers the second player, called Client, $q+1$ edges of ${\mathcal G}(n,p)$ which have not been offered previously. Client then chooses one of these edges, which he claims, and the remaining $q$ edges go back to Waiter. Waiter wins this game if by the time every edge of ${\mathcal G}(n,p)$ has been claimed by some player, the graph consisting of Client's edges is Hamiltonian; otherwise Client is the winner. Client-Waiter games are defined analogously, the main difference being that Client wins the game if his graph is Hamiltonian and Waiter wins otherwise. In this paper we determine a sharp threshold for both games. Namely, for every fixed positive integer $q$, we prove that the smallest edge probability $p$ for which a.a.s. Waiter has a winning strategy for the $(1 : q)$ Waiter-Client Hamiltonicity game is $(1 + o(1)) \log n/n$, and the smallest $p$ for which a.a.s. Client has a winning strategy for the $(1 : q)$ Client-Waiter Hamiltonicity game is $(q + 1 + o(1)) \log n/n$.  
\end{abstract}

\section{Introduction} \label{intro}

The theory of positional games on graphs and hypergraphs goes back to the seminal papers of Hales and Jewett~\cite{HJ}, Lehman~\cite{Lehman}, and Erd\H{o}s and Selfridge~\cite{ES}. It has since become a highly developed area of combinatorics (see, e.g., the monograph of Beck~\cite{TTT} and the recent monograph~\cite{HKSSbook}). The most popular and widely studied positional games are the so-called Maker-Breaker games. Let $q$ be a positive integer, let $X$ be a finite set and let ${\mathcal F}$ be a family of subsets of $X$. The set $X$ is the \emph{board} of the game and the elements of ${\mathcal F}$ are the \emph{winning sets}. In each round of the biased $(1 : q)$ Maker-Breaker game $(X, {\mathcal F})$, Maker claims one previously unclaimed element of $X$ and then Breaker responds by claiming $q$ previously unclaimed elements of $X$. Maker wins this game if, by the time every element of $X$ has been claimed, he has claimed all elements of some set $A \in {\mathcal F}$; otherwise Breaker is the winner. Since this is a finite, perfect information game with no chance moves and no possibility of a draw, one of the two players must have a winning strategy.  

The so-called Avoider-Enforcer games form another class of well-studied positional games. In these games, Enforcer aims to force Avoider to claim all elements of some set $A \in {\mathcal F}$. Avoider-Enforcer games are sometimes referred to as mis\`ere Maker-Breaker games. There are two different sets of rules for Avoider-Enforcer games: \emph{strict rules} under which the number of board elements a player claims per round is precisely his bias and \emph{monotone rules} under which the number of board elements a player claims per round is at least as large as his bias (for more information on Avoider-Enforcer games see, for example,~\cite{HKSae, HKSSae, HKSSbook}).

In this paper, we study \emph{Waiter-Client} and \emph{Client-Waiter} positional games. In every round of the biased $(1 : q)$ Waiter-Client game $(X, {\mathcal F})$, the first player, called Waiter, offers the second player, called Client, $q+1$ previously unclaimed elements of $X$. Client then chooses one of these elements which he claims, and the remaining $q$ elements are claimed by Waiter (if, in the final round of the game, strictly less than $q+1$ unclaimed elements remain, then all of them are claimed by Waiter). The game ends as soon as all elements of $X$ have been claimed. Waiter wins this game if he manages to force Client to claim all elements of some $A \in {\mathcal F}$; otherwise Client is the winner. Client-Waiter games are defined analogously. The only two differences are that Client wins this game if and only if he manages to claim all elements of some $A \in {\mathcal F}$ (otherwise Waiter is the winner), and that Waiter is allowed to offer strictly less than $q+1$ (but at least $1$) board elements per round (this is a technical issue which is needed in order to overcome a certain lack of monotonicity; more details can be found in~\cite{Bednarska2013}). Waiter-Client and Client-Waiter games were first defined and studied by Beck under the names Picker-Chooser and Chooser-Picker, respectively (see, e.g.,~\cite{becksec}). 

The interest in Waiter-Client and Client-Waiter games is three-fold. Firstly, they are interesting in their own right. For example, a Waiter-Client (respectively, Client-Waiter) game in which Waiter plays randomly is the well-known avoiding (respectively, embracing) Achlioptas process (without replacement). Many randomly played Waiter-Client and Client-Waiter games have been considered in the literature, often under different names (see, e.g.,~\cite{BF, KLS, KLuS, KSS, MST}). Secondly, they exhibit a strong probabilistic intuition (see, e.g.,~\cite{becksec, TTT, ctcs, BHL, BHKL, HKT}). That is, the outcome of many natural Waiter-Client and Client-Waiter games is often roughly the same as it would be had both players played randomly (although, typically, a random strategy for any single player is very far from optimal). 
Lastly, it turns out that, in some cases, these games are useful in the analysis of Maker-Breaker games (examples and other related issues can be found, e.g., in \cite{becksec, cdm, Bednarska2013, Knox}).

Our focus in this paper is on Waiter-Client and Client-Waiter Hamiltonicity games played on the edge set of the binomial random graph $\mathcal{G}(n,p)$. A \emph{Hamilton cycle} of a graph $G$ is a cycle which passes through every vertex of $G$ exactly once. A graph is said to be \emph{Hamiltonian} if it admits a Hamilton cycle. Hamiltonicity is one of the most central notions in graph theory, and has been intensively studied by numerous researchers for many
years. Let ${\mathcal H} = {\mathcal H}(n)$ denote the property of an $n$-vertex graph being Hamiltonian, that is, ${\mathcal H} = \{G : V(G) = [n] \text{ and } G \text{ is a Hamiltonian graph}\}$. For every positive integer $q$, let ${\mathcal W}_{\mathcal H}^q$ denote the graph property of being Waiter's win in the $(1 : q)$ Waiter-Client Hamiltonicity game on $E(G)$, that is, $G \in {\mathcal W}_{\mathcal H}^q$ if and only if Waiter has a winning strategy for the $(1 : q)$ Waiter-Client game $(E(G), {\mathcal H})$. Similarly, let ${\mathcal C}_{\mathcal H}^q$ denote the graph property of being Client's win in the $(1 : q)$ Client-Waiter Hamiltonicity game on $E(G)$, that is, $G \in {\mathcal C}_{\mathcal H}^q$ if and only if Client has a winning strategy for the $(1 : q)$ Client-Waiter game $(E(G), {\mathcal H})$. Note that, if Waiter has a winning strategy for the $(1:q)$ Waiter--Client game $(E(G),\mathcal{H})$ for some graph $G$, he may use this same strategy to also win the $(1:q)$ Waiter--Client game $(E(G'),\mathcal{H})$ for any graph $G'$ that contains $G$ as a subgraph and for which $V(G')=V(G)$. Hence, graph property ${\mathcal W}_{\mathcal H}^q$ is monotone increasing. By similar reasoning, this is also true for ${\mathcal C}_{\mathcal H}^q$. 

We are interested in finding the minimum density that a graph typically needs to ensure that it satisfies the property ${\mathcal W}_{\mathcal H}^q$ (and similarly ${\mathcal C}_{\mathcal H}^q$). Formally, we would like to find sharp thresholds for ${\mathcal W}_{\mathcal H}^q$ and for ${\mathcal C}_{\mathcal H}^q$, whose existence is guaranteed due to the monotonicity of these properties (see \cite{Bollobas1986,Friedgut1999}). That is, for every fixed positive integer $q$, we would like to find functions $p_{\mathcal{W}, q} = p_{\mathcal{W}, q}(n)$ and $p_{\mathcal{C}, q} = p_{\mathcal{C}, q}(n)$ such that, for every $\varepsilon > 0$, we have 
$$
\lim_{n \rightarrow \infty} \mathbb{P}[{\mathcal G}(n, (1 - \varepsilon) p_{\mathcal{W}, q}) \in {\mathcal W}_{\mathcal H}^q] = 0 \,\,\,\,\, \text{and} \,\,\,\,
\lim_{n \rightarrow \infty} \mathbb{P}[{\mathcal G}(n, (1 + \varepsilon) p_{\mathcal{W}, q}) \in {\mathcal W}_{\mathcal H}^q] = 1,
$$       
and similarly,
$$
\lim_{n \rightarrow \infty} \mathbb{P}[{\mathcal G}(n, (1 - \varepsilon) p_{\mathcal{C}, q}) \in {\mathcal C}_{\mathcal H}^q] = 0 \,\,\,\,\, \text{and} \,\,\,\,
\lim_{n \rightarrow \infty} \mathbb{P}[{\mathcal G}(n, (1 + \varepsilon) p_{\mathcal{C}, q}) \in {\mathcal C}_{\mathcal H}^q] = 1.
$$

A classical result in the theory of random graphs, due to Koml\'os and Szemer\'edi~\cite{KSz} and independently Bollob\'as~\cite{BolHam}, asserts that $\log n/n$ is a sharp threshold for the appearance of a Hamilton cycle in ${\mathcal G}(n,p)$. Our first result shows that the same function is also a sharp threshold for the property ${\mathcal W}_{\mathcal H}^q$, for every fixed positive integer $q$. 

\begin{theorem} \label{WCHamilton}
Let $q$ be a positive integer. Then $\log n/n$ is a sharp threshold for the property ${\mathcal W}_{\mathcal H}^q$.
\end{theorem}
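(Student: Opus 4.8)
I would prove the theorem by establishing a $0$-statement and a $1$-statement, which together pin the (already guaranteed) sharp threshold of $\mathcal{W}_{\mathcal H}^q$ at $\log n/n$. The $0$-statement --- that $\mathcal G(n,(1-\eps)\log n/n)\notin\mathcal{W}_{\mathcal H}^q$ \aas{} for every fixed $\eps>0$ --- is immediate: Client's final graph is a spanning subgraph of $\mathcal G(n,p)$, so if $\mathcal G(n,p)$ is not Hamiltonian then neither is Client's graph and Client wins; and since the expected number of isolated vertices of $\mathcal G(n,(1-\eps)\log n/n)$ is $n(1-p)^{n-1}=n^{\eps+o(1)}\to\infty$, a second moment argument shows that \aas{} this graph has an isolated vertex and hence is not Hamiltonian. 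All the work is therefore in the $1$-statement.

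For the $1$-statement, fix $\eps>0$, set $p=(1+\eps)\log n/n$, and (via the usual sprinkling) expose $\mathcal G(n,p)$ as an edge-disjoint union $E(G)=E_1\cup E_2$ with $G_i:=([n],E_i)$, where $G_1\sim\mathcal G(n,(1+\eps/2)\log n/n)$ and $G_2$ is a reservoir of density $\eps\log n/(2n)$. Let $d:=\lf\delta_0\log n\rf$ and $k:=k_0 n$, with $\delta_0=\delta_0(\eps,q)>0$ and $k_0=k_0(\eps,q)>0$ small constants to be chosen. I would isolate the following three properties, each holding \aas: \textbf{(P1)} $\delta(G_1)\ge 2(q+1)d$; \textbf{(P2)} every $T\subs[n]$ with $|T|\le k$ spans fewer than $\tfrac{d}{6}|T|$ edges of $G_1$; \textbf{(P3)} any two disjoint subsets of $[n]$ of size at least $k/6$ are joined by at least $3(q+1)n$ edges of $G_2$. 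Here (P1) reflects that the minimum degree of $\mathcal G(n,(1+\eps/2)\log n/n)$ is \aas{} $\Theta(\log n)$, so $\delta_0$ may be taken small enough; (P3) is a routine first moment plus union bound estimate; and (P2) is the standard ``no small dense subgraph'' property of sparse random graphs, which holds here precisely because $d=\Theta(\log n)$ grows (it fails for constant $d$) and which is what forces $k_0$ to be small relative to $\delta_0$.

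Waiter's strategy then has three phases. In \emph{Phase~I} Waiter plays only on $E_1$ and, using (P1) together with the fact --- standard in biased Waiter-Client games --- that Waiter can force Client to build a spanning subgraph of minimum degree $d$ whenever the board has minimum degree at least a suitable $q$-dependent multiple of $d$, forces Client to claim a subgraph $H_0\subs G_1$ with $\delta(H_0)\ge d$. Then $H_0$ is automatically a $(k/3,2)$-expander (meaning $|N_{H_0}(S)|\ge 2|S|$ for all $S$ with $|S|\le k/3$): otherwise $T:=S\cup N_{H_0}(S)$ would have $|T|<3|S|\le k$ while $e_{G_1}(T)\ge e_{H_0}(T)\ge\tfrac{d}{2}|S|>\tfrac{d}{6}|T|$, contradicting (P2). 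As every component of a $(k/3,2)$-expander has more than $k/3=\Theta(n)$ vertices, $H_0$ has only $O(1)$ components. In \emph{Phase~II} Waiter connects them using $E_2$: while Client's graph is disconnected, Waiter picks two of its components (each of size more than $k/3>k/6$), offers $q+1$ edges of $E_2$ between them --- available by (P3) and unclaimed, since $E_2$ was untouched in Phase~I and only $O(q)$ of its edges have been used in earlier merges --- and Client merges them; after $O(1)$ rounds Client holds a connected $(k/3,2)$-expander. In \emph{Phase~III} Waiter forces boosters from $E_2$: while Client's graph $H$ is a connected $(k/3,2)$-expander that is not Hamiltonian, P\'osa's rotation-extension lemma supplies disjoint sets $A,B$ with $|A|,|B|\ge k/6$ such that, for every non-edge $e$ of $H$ with one endpoint in $A$ and the other in $B$, the graph $H+e$ has a longer longest path than $H$ or is Hamiltonian; by (P3) at least $3(q+1)n$ of these pairs are edges of $E_2$, of which at most $(q+1)n+O(q)$ have been claimed so far, so Waiter can offer $q+1$ unclaimed ones. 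Whichever of them Client takes, the length of a longest path in his graph strictly increases, or his graph becomes Hamiltonian; since the former cannot happen more than $n$ times, Client's graph is Hamiltonian after at most $n$ rounds of Phase~III and Waiter has won.

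The main obstacle is Phase~I: one must check that the minimum-degree Waiter-Client game behaves well on the rather sparse board $G_1$, so that Waiter can indeed drive Client's minimum degree up to $d=\Theta(\log n)$, and one must pick $\delta_0$ and $k_0$ so that (P1) and (P2) hold at once --- the tension being that Phase~I needs $d=\delta_0\log n\le\delta(G_1)/(2(q+1))$, whereas (P2) demands $k_0$ small enough that no vertex set of size up to $k_0 n$ has average degree as large as $d/3$. Phases~II and III, and the probabilistic inputs (P1)--(P3), are essentially routine; the only mild technical point is the precise extraction of the disjoint sets $A,B$ from the P\'osa structure.
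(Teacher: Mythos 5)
Your overall architecture closely mirrors the paper's: the $0$-statement via isolated vertices, a sprinkling split of $G$ into a main part $G_1$ and a reservoir $G_2$, forcing large minimum degree (hence expansion) in Client's graph using only $G_1$, achieving connectivity via the reservoir, and then repeatedly offering boosters until Client's graph is Hamiltonian. Phases 0--II are sound and essentially equivalent to the paper's Stages 0--2 (the paper's Stage~2 achieves full $(n/5,2)$-expansion via Bednarska-Bzd\c{e}ga's transversal-game criterion, while you achieve only connectivity, but connectivity of a $(k/3,2)$-expander is what P\'osa's lemma actually needs, so this is a legitimate simplification).

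The genuine gap is in Phase~III, and it is more than the ``mild technical point'' you flag. You assert that P\'osa's rotation--extension lemma produces disjoint sets $A,B$ with $|A|,|B|\ge k/6$ such that \emph{every} non-edge of $H$ between $A$ and $B$ is a booster, and you designed property~(P3) precisely to exploit this bipartite structure. But P\'osa's lemma does not yield such a structure. What rotation--extension gives you, on a connected non-Hamiltonian $(t,2)$-expander, is a set $R$ (of size $>t$) of possible second endpoints of a longest path with the first endpoint fixed, and \emph{for each} $r\in R$ a set $L_r$ (of size $>t$, \emph{depending on} $r$) of possible first endpoints when $r$ is held fixed; the boosters are the non-edges $\{r,\ell\}$ with $r\in R$, $\ell\in L_r$. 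This is a union of stars of total size at least $(t+1)^2/2$, not a complete-bipartite pattern $A\times B$, because the sets $L_r$ vary with $r$. Consequently (P3), which controls $E_{G_2}(A,B)$ for \emph{fixed} large $A,B$, does not guarantee that $\ge 3(q+1)n$ of the boosters lie in $E_2$, and Phase~III as written does not go through.

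The paper circumvents exactly this by not attempting to locate boosters inside a fixed bipartite slab. Instead, Lemma~\ref{Prop4} uses only the \emph{count} $|{\mathcal B}_\Gamma|\ge n^2/50$ from Lemma~\ref{Posa}, applies Chernoff to $|{\mathcal B}_\Gamma\cap E(G)|$ for a single candidate expander $\Gamma$, and then takes a union bound over all connected non-Hamiltonian $(n/5,2)$-expander subgraphs $\Gamma\subseteq G$ with at most $s_1 n\log n$ edges; the sparsity of $\Gamma$ (which the paper controls by making the reservoir have only $O(n)$ edges, via Lemma~\ref{sizeofR}) is what makes that union bound survive. To repair your Phase~III you would need an analogous statement --- that a.a.s.\ every small connected non-Hamiltonian $(k/3,2)$-expander subgraph of $G$ has $\Theta(n\log n)$ boosters inside $E(G)$ (or inside $E_2$), proved by a union bound over subgraphs rather than by (P3) --- and then account for the edges already claimed, as the paper does at the end of its Stage~3.
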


Our second result shows that, in contrast to our first result, the sharp threshold for the property ${\mathcal C}_{\mathcal H}^q$ grows with $q$, and even for $q=1$, is already larger than the threshold for the Hamiltonicity of ${\mathcal G}(n,p)$.  

\begin{theorem} \label{CWHamilton}
Let $q$ be a positive integer. Then $(q+1) \log n/n$ is a sharp threshold for the property ${\mathcal C}_{\mathcal H}^q$.
\end{theorem}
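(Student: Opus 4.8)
The plan is to handle the two halves of the sharp threshold by quite different means: the lower bound will be essentially a corollary of the (known) sharp threshold for the Client--Waiter spanning--connectivity game, while the upper bound carries the weight of the proof.

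\emph{Lower bound ($0$-statement).} A Hamiltonian graph is connected, so any strategy with which Client wins the $(1:q)$ Client--Waiter Hamiltonicity game on $E(G)$ also wins the $(1:q)$ Client--Waiter game on $E(G)$ whose winning sets are the spanning connected subgraphs of $G$; hence the monotone property ${\mathcal C}^q_{\mathcal H}$ is contained in the monotone property ``Client wins the $(1:q)$ spanning--connectivity game''. I would then invoke, or reprove for a self-contained account, the fact that this latter property has sharp threshold $(q+1)\log n/n$. The constant $q+1$ enters there on Waiter's side: at $p=(1-\eps)(q+1)\log n/n$ the random graph \aas\ has $\Theta(n)$ vertices of degree below $(1-\eps/2)(q+1)\log n$, and Waiter overwhelms them by repeatedly offering $q+1$ edges incident with $q+1$ still-endangered such vertices, so that in each round Client can repair only one of them while Waiter claims a further edge at each of the other $q$; a short potential computation of box-game / biased Erd\H{o}s--Selfridge type, with base $q+1$, then shows that some vertex ends the game with at most one Client edge, so Client's graph is disconnected. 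Altogether, for $p=(1-\eps)(q+1)\log n/n$ we get $\mathbb{P}\big[\mathcal G(n,p)\in{\mathcal C}^q_{\mathcal H}\big]\le\mathbb{P}\big[\mathcal G(n,p)\text{ is a Client win for connectivity}\big]=o(1)$.

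\emph{Upper bound ($1$-statement).} Fix $p\ge(1+\eps)(q+1)\log n/n$, so that \aas\ $\delta(\mathcal G(n,p))\ge(1+\eps/2)(q+1)\log n$ and $\mathcal G(n,p)$ is \emph{robustly expanding}: \aas\ after deleting from each vertex, adversarially, all but a $1/(q+1)$ fraction of its incident edges, the remaining graph still has the property that every $S$ with $|S|\le n/\log^2 n$ satisfies $|N(S)|\ge 2|S|$ and any two disjoint sets of size $\ge n/\log^2 n$ span an edge. I would give Client a two-stage strategy. In \textbf{Stage~1} Client plays to guarantee that his graph $C_1$ is a spanning connected $(n/\log^2 n)$-expander with $\delta(C_1)\ge 2$. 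The density $(1+\eps)(q+1)\log n/n$ is dictated by the connectivity requirement, and Client realises it using the winning strategy of the positive half of the connectivity threshold result; the priority rule at its heart --- always grab an offered edge that is still needed at some under-served vertex or across some under-served cut --- also delivers Client roughly a $1/(q+1)$ share of the edges at every vertex (each star having $\ge(1+\eps/2)(q+1)\log n$ edges), whence the expansion of $C_1$ follows from the robust expansion of $\mathcal G(n,p)$. Proving that this priority scheme actually meets all of its quotas at the \emph{sharp} density is the positive counterpart of the box-game estimate from the lower bound. In \textbf{Stage~2} Client upgrades $C_1$ to a Hamiltonian graph by the rotation--extension (``booster'') method: as long as the current Client graph is a connected $(n/\log^2 n)$-expander that is not yet Hamiltonian, it has $\Omega(n^2)$ boosters, hence $\Omega(n^2p)$ boosters inside $\mathcal G(n,p)$, and this remains true after each booster is added; since $O(n)$ boosters suffice to reach Hamiltonicity, Client keeps collecting them and wins. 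This last step is routine given the standard booster machinery, exactly as in the corresponding Maker--Breaker and Client--Waiter connectivity arguments.

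\emph{Where the difficulty lies.} The technical heart is Stage~1 of the upper bound: Client must be handed a \emph{single} strategy that simultaneously (i) forces minimum Client-degree $2$ at every vertex at the sharp density $(1+\eps)(q+1)\log n/n$ --- the mirror image of the Waiter strategy driving the lower bound --- and (ii) spends few enough of Client's picks on this local bookkeeping that he still captures, across every small set and every linear cut, enough of the edges of $\mathcal G(n,p)$ to inherit its expansion, all of this under control uniformly over the \aas-atypical local structure near the threshold. By comparison, the lower bound is immediate from the connectivity threshold, and Stage~2 is standard.
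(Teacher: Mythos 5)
Your overall architecture differs from the paper's in both directions, and each half of your argument has a significant gap.

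\textbf{Lower bound.} The paper does \emph{not} pass through a known connectivity threshold (there is none available to cite); it gives a direct argument, and the key difficulty is exactly where you gloss over. You propose to use the $\Theta(n)$ vertices of degree below $(1-\eps/2)(q+1)\log n$. Since this cutoff sits \emph{above} the mean degree $(1-\eps)(q+1)\log n$, nearly all vertices qualify, but the box sizes are then $t\approx(1-\eps/2)(q+1)\log n$, and the Client--Waiter box game (paper's Proposition 3.6, or any biased Erd\H{o}s--Selfridge potential of base $(q+1)/q$) needs roughly $\bigl((q+1)/q\bigr)^{t}$ disjoint boxes to be a Waiter win. For $q=1$ this is $n^{(1-\eps/2)\cdot 2\log 2}=n^{1.386(1-\eps/2)}\gg n$ when $\eps$ is small, so $\Theta(n)$ boxes are nowhere near enough. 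The correct target degree is around $q\log n$, well \emph{below} the mean, at which point only polynomially many vertices have that degree --- and one has to verify that the polynomial count wins the race against $\bigl((q+1)/q\bigr)^{k}$. The paper's Lemmas 4.4--4.6 (the identity $\sum_i (q/(q+1))^i\mu_i = n(1-p/(q+1))^{n-1}$ plus a Chebyshev second-moment argument to locate and count the right degree class $k$) exist precisely to settle this balance; without them your potential computation does not close, and the threshold you would extract from your sketch is weaker than $(q+1)\log n/n$.

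\textbf{Upper bound.} Your Stage~1 (a transversal-style priority rule forcing Client to collect a $1/(q+1)$ share at every vertex and across every linear cut) is the right idea and corresponds to the paper's use of Theorem 3.2 with the static families $\mathcal F_1$ (degree sets) and $\mathcal F_2$ (cut sets). But Stage~2 --- the booster-chasing --- does not transfer from the Maker--Breaker/Waiter--Client world to Client--Waiter. In Client--Waiter games Client does not control which edges appear in a round, so ``Client keeps collecting boosters'' is not a strategy Client can enforce: Waiter chooses what to offer and when. Moreover, the quantitative race is genuinely unfavourable --- a non-Hamiltonian expander has $\Theta(n\log n)$ boosters inside $\mathcal G(n,p)$ (paper's Lemma 5.6), while Waiter ends the game holding roughly $\tfrac{q}{q+1}e(G)=\Theta(n\log n)$ edges, so Waiter has the budget to pre-empt essentially all of them; the paper's Waiter--Client proof can afford a booster stage only because there Waiter controls the offers and can cap his own edge count. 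The paper avoids dynamic boosters entirely on the Client side: it asks Client only to hit the two \emph{fixed} families $\mathcal F_1$ and $\mathcal F_2$, and then deduces Hamiltonicity of the resulting graph deterministically from the expansion-plus-connectivity criterion of Theorem 5.7. That is the decisive structural difference between your plan and the actual proof, and it is what makes the upper bound go through.
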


A trivial necessary condition for Hamiltonicity is minimum degree at least $2$. The latter is not achieved if $p \leqslant (1 - o(1)) \log n/n$. On the other hand, for $p \geqslant (1 + o(1)) \log n/n$, it is not hard to show that a.a.s. (\emph{i.e.}, with probability tending to 1 as $n$ tends to infinity) Waiter can force Client to build a graph with large minimum degree. This partly explains the location of the threshold in Theorem~\ref{WCHamilton}. In Theorem~\ref{CWHamilton} we need to consider the degree sequence of ${\mathcal G}(n,p)$ more carefully. In order to win the Client-Waiter Hamiltonicity game, it is enough for Waiter to find \emph{many} vertices of \emph{small} degree and then isolate one of them in Client's graph. On the other hand, if all degrees are sufficiently large (enough for the sum $\sum_{v \in V(G)} \left(\frac{q}{q+1} \right)^{d_G(v)}$ to be very small, where $G \sim {\mathcal G}(n,p)$), then Client can build a graph with large minimum degree. Balancing these two properties of the degree sequence will determine the location of the threshold in Theorem~\ref{CWHamilton}.        

Theorems~\ref{WCHamilton} and~\ref{CWHamilton} determine sharp thresholds for $(1 : q)$ Waiter-Client and Client-Waiter Hamiltonicity games for every fixed positive integer $q$. Somewhat surprisingly, the best known analogous results for $(1 : q)$ Maker-Breaker and Avoider-Enforcer Hamiltonicity games on ${\mathcal G}(n,p)$, where $q \geqslant 2$ is a fixed integer, are not as accurate. It was conjectured in~\cite{SS} that, for every $1 \leqslant q \leqslant (1 - o(1)) n/\log n$, the smallest edge probability $p$ for which a.a.s. Maker has a winning strategy in the $(1 : q)$ Maker-Breaker Hamiltonicity game is $\Theta(q \log n/n)$. This was proved in~\cite{FGKN}, where an analogous statement for Avoider-Enforcer games was proved as well. An even stronger result was proved in~\cite{FGKN} under the additional assumption that $q = \omega(1)$. In this case, the graph property of being Maker's win in the $(1 : q)$ Maker-Breaker Hamiltonicity game has a sharp threshold at $q \log n/n$. Very accurate results are known for the $(1 : 1)$ Maker-Breaker Hamiltonicity game on a random graph (see~\cite{HKSSham, BFHK}).

\section{Preliminaries}

\noindent For the sake of simplicity and clarity of presentation, we do not make a par\-ti\-cu\-lar effort to optimize the constants obtained in some of our proofs. We also omit floor and ceiling signs whenever these are not crucial. Most of our results are asymptotic in nature and whenever necessary we assume that the number of vertices $n$ is sufficiently large. Throughout this paper, $\log$ stands for the natural logarithm, unless explicitly stated otherwise. Our graph-theoretic notation is standard and follows that of~\cite{West}. In particular, we use the following.

For a graph $G$, let $V(G)$ and $E(G)$ denote its sets of vertices and edges respectively, and let $v(G) = |V(G)|$ and $e(G) = |E(G)|$. For a set $A \subseteq V(G)$, let $E_G(A)$ denote the set of edges of $G$ with both endpoints in $A$ and let $e_G(A) = |E_G(A)|$. For disjoint sets $A,B \subseteq V(G)$, let $E_G(A,B)$ denote the set of edges of $G$ with one endpoint in $A$ and one endpoint in $B$, and let $e_G(A,B) = |E_G(A,B)|$. For a set $S \subseteq V(G)$, let $G[S]$ denote the subgraph of $G$ induced on the set $S$. For a set $S \subseteq V(G)$, let $N_G(S) = \{v \in V(G) \setminus S : \exists u \in S \text{ such that } uv \in E(G)\}$ denote the \emph{outer neighbourhood} of $S$ in $G$. For a vertex $u \in V(G)$ we abbreviate $N_G(\{u\})$ under $N_G(u)$ and let $d_G(u) = |N_G(u)|$ denote the \emph{degree} of $u$ in $G$. The \emph{maximum degree} of a graph $G$ is $\Delta(G) = \max \{d_G(u) : u \in V(G)\}$ and the \emph{minimum degree} of a graph $G$ is $\delta(G) = \min \{d_G(u) : u \in V(G)\}$. Often, when there is no risk of confusion, we omit the subscript $G$ from the notation above. 

For any family $\mathcal{F}$ of subsets of some set $X$, we define the \emph{transversal} of $\mathcal{F}$ to be the set $\mathcal{F}^* = \{A \subseteq X : A \cap B \neq \emptyset \text{ for every } B \in \mathcal{F}\}$. 

Assume that some Waiter-Client or Client-Waiter game, played on the edge-set of some graph $H = (V,E)$, is in progress. At any given moment during this game, let $E_W$ denote the set of all edges that were claimed by Waiter up to that moment, let $E_C$ denote the set of all edges that were claimed by Client up to that moment, let $G_W = (V, E_W)$ and let $G_C = (V, E_C)$. Moreover, let $G_F = (V, E_F)$, where $E_F = E \setminus (E_W \cup E_C)$; the edges of $E_F$ are called \emph{free}. 

Throughout the paper we will use the following well-known concentration inequalities (see, e.g.,~\cite{Alon2008}).

\begin{theorem}[Chernoff] \label{Chernoff}
If $X \sim \text{Bin}(n,p)$, then
\begin{itemize}
\item[(i)] $\mathbb{P}[X < (1-a) n p] < \exp\left(-\frac{a^2 n p}{2}\right)$ for every $a > 0$.
\item[(ii)] $\mathbb{P}[X > (1+a) n p] < \exp\left(-\frac{a^2 n p}{3}\right)$ for every $0 < a < 1$.
\end{itemize}
\end{theorem}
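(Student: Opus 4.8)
The plan is to prove both inequalities by the classical exponential-moment (Chernoff--Bernstein) method: bound a tail probability by applying Markov's inequality to $e^{\pm tX}$, estimate the moment generating function using independence together with $1+x\le e^x$, optimize over the free parameter $t>0$, and finally dispatch the resulting one-variable inequalities in $a$ via the Taylor expansion of the logarithm. First I would write $X=\sum_{i=1}^n X_i$ with $X_1,\dots,X_n$ independent Bernoulli random variables of mean $p$, so that for every real $t$,
\[
\mathbb{E}\big[e^{tX}\big] = \big(1-p+pe^t\big)^n = \big(1+p(e^t-1)\big)^n \le \exp\!\big(np(e^t-1)\big).
\]

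For part (ii), fix $0<a<1$ and $t>0$. Markov's inequality applied to $e^{tX}$ together with the above gives
\[
\mathbb{P}\big[X>(1+a)np\big] \le e^{-t(1+a)np}\,\mathbb{E}\big[e^{tX}\big] \le \exp\!\big(np(e^t-1-(1+a)t)\big),
\]
and the exponent is minimized at $t=\log(1+a)>0$, which yields $\mathbb{P}[X>(1+a)np]\le\exp(-np((1+a)\log(1+a)-a))$. It then suffices to check $(1+a)\log(1+a)-a\ge a^2/3$ for $0<a<1$. From $\log(1+a)=\sum_{k\ge1}(-1)^{k+1}a^k/k$ one computes $(1+a)\log(1+a)-a=\sum_{k\ge2}(-1)^k a^k/(k(k-1))$; since the absolute values of the terms of this alternating series are decreasing for $0<a<1$, truncating after the cubic term gives $(1+a)\log(1+a)-a\ge a^2/2-a^3/6=a^2(3-a)/6\ge a^2/3$, as required.

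For part (i), if $a\ge1$ then $(1-a)np\le0\le X$ and the probability is $0$, so the bound is trivial; hence assume $0<a<1$. For $t>0$, Markov's inequality applied to $e^{-tX}$ gives
\[
\mathbb{P}\big[X<(1-a)np\big] \le e^{t(1-a)np}\,\mathbb{E}\big[e^{-tX}\big] \le \exp\!\big(np(e^{-t}-1+(1-a)t)\big),
\]
whose exponent is minimized at $t=-\log(1-a)>0$, yielding $\mathbb{P}[X<(1-a)np]\le\exp(-np(a+(1-a)\log(1-a)))$; finally $\log(1-a)=-\sum_{k\ge1}a^k/k$ gives $a+(1-a)\log(1-a)=\sum_{k\ge2}a^k/(k(k-1))\ge a^2/2$, completing the argument. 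There is no genuine obstacle here; the only mild point worth noting is that the constant $3$ (rather than $2$) in part (ii) is exactly what makes the elementary bound $(1+a)\log(1+a)-a\ge a^2/3$ valid on the whole interval $0<a<1$, the alternating-series estimate being the cleanest way to see it. All inequalities are in fact strict, as required, since $1+x<e^x$ for $x\neq0$, apart from the degenerate cases $p\in\{0,1\}$ where the statement is immediate.
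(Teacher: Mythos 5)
Your proof is correct. The paper does not prove this theorem at all — it is quoted as a standard concentration inequality with a citation to Alon and Spencer — and your argument is precisely the classical exponential-moment derivation: the MGF bound via $1+x\le e^x$, optimization at $t=\pm\log(1\pm a)$, and the series estimates $(1+a)\log(1+a)-a\ge a^2/2-a^3/6\ge a^2/3$ on $0<a<1$ and $a+(1-a)\log(1-a)\ge a^2/2$, all of which check out, as does your remark on strictness and the trivial case $a\ge 1$ in part (i).
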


\begin{theorem}[Chebyshev] \label{Chebyshev}
If $X$ is a random variable with $\mathbb{E}[X] < \infty$ and $\text{Var}[X] < \infty$, then for any $k > 0$
$$
\mathbb{P}[|X - \mathbb{E}[X]| \geqslant k] \leqslant \frac{\text{Var}[X]}{k^2}.
$$
\end{theorem}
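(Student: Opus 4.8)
The plan is to deduce the inequality from Markov's inequality applied to an appropriately chosen nonnegative random variable. Recall that Markov's inequality states: if $Y \ge 0$ is a random variable with $\mathbb{E}[Y] < \infty$ and $a > 0$, then $\mathbb{P}[Y \ge a] \le \mathbb{E}[Y]/a$. I would first record its one-line proof (or simply cite it): since $Y \ge a \cdot \mathbf{1}_{\{Y \ge a\}}$ holds pointwise — the indicator is $0$ or $1$, and wherever it equals $1$ we have $Y \ge a$ — taking expectations and using linearity and monotonicity gives $\mathbb{E}[Y] \ge a\, \mathbb{P}[Y \ge a]$, which rearranges to the stated bound (and is vacuous if $\mathbb{E}[Y] = \infty$).

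Next I would apply this with $Y = (X - \mathbb{E}[X])^2$. This $Y$ is nonnegative, and its expectation is $\mathbb{E}[Y] = \text{Var}[X] < \infty$ by hypothesis (the finiteness of $\mathbb{E}[X]$ assumed in the statement is what makes the centering meaningful). For any fixed $k > 0$, the events $\{|X - \mathbb{E}[X]| \ge k\}$ and $\{(X - \mathbb{E}[X])^2 \ge k^2\}$ coincide, because $t \mapsto t^2$ is strictly increasing on $[0,\infty)$ and $k > 0$. Hence Markov's inequality with $a = k^2 > 0$ yields
$$
\mathbb{P}\big[|X - \mathbb{E}[X]| \ge k\big] \;=\; \mathbb{P}[Y \ge k^2] \;\le\; \frac{\mathbb{E}[Y]}{k^2} \;=\; \frac{\text{Var}[X]}{k^2},
$$
which is exactly the assertion.

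There is essentially no obstacle here: the whole argument is a two-step reduction to Markov's inequality, itself immediate from the pointwise estimate $Y \ge a\,\mathbf{1}_{\{Y \ge a\}}$. The only minor points deserving a moment's attention are that squaring preserves the relevant event precisely because $k > 0$, and the degenerate possibility $\text{Var}[X] = 0$, in which case $X$ equals $\mathbb{E}[X]$ almost surely and the left-hand side is $0$, so the bound holds trivially. As an alternative one could argue directly by writing $\text{Var}[X] = \mathbb{E}[(X - \mathbb{E}[X])^2]$, splitting this expectation over the events $\{|X - \mathbb{E}[X]| \ge k\}$ and its complement, bounding the integrand on the first event below by $k^2$ and discarding the (nonnegative) contribution of the second; but the reduction to Markov is the cleanest presentation and I would go with that.
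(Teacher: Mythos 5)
Your proof is correct: it is the standard derivation of Chebyshev's inequality by applying Markov's inequality to the nonnegative variable $(X-\mathbb{E}[X])^2$, and all the small points (the equivalence of the events since $k>0$, the degenerate case $\mathrm{Var}[X]=0$) are handled properly. The paper does not prove this statement itself but cites it as a well-known concentration inequality, and your argument is precisely the classical one given in the cited reference, so there is nothing further to compare.
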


The rest of this paper is organized as follows: In Section~\ref{sec::gameTools} we state and prove various results about Waiter-Client and Client-Waiter games, some of which have independent interest. In Section~\ref{sec::Gnp} we explore several properties of random graphs. In Section~\ref{sec::expanders} we discuss the relation between expanders and Hamiltonicity. Using the results derived in the previous three sections, we prove Theorem~\ref{WCHamilton} in Section~\ref{sec::mainWC} and Theorem~\ref{CWHamilton} in Section~\ref{sec::mainCW}. Finally, in Section~\ref{sec::openprob} we discuss possible directions for future research.    

\section{Game-Theoretic Tools} \label{sec::gameTools}

In this section we state and prove various winning criteria for Waiter-Client and Client-Waiter games which we will use in our proofs of Theorems~\ref{WCHamilton} and~\ref{CWHamilton}. We begin by stating a result of Bednarska-Bzd\c ega~\cite{Bednarska2013} which provides a winning criteria for Waiter in biased Waiter-Client transversal games.

\begin{theorem}[\cite{Bednarska2013}] \label{Bednarska}
Let $q$ be a positive integer, let $X$ be a finite set and let ${\mathcal F}$ be a family of subsets of $X$. If
$$
\sum_{A \in {\mathcal F}} 2^{-|A|/(2q-1)} < 1/2,
$$
then Waiter has a winning strategy for the $(1 : q)$ Waiter-Client game $(X, {\mathcal F}^*)$.
\end{theorem}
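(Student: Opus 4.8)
The plan is to adapt the Erd\H os--Selfridge potential-function method to Waiter's point of view. Since $\mathcal F^*$ is upward closed, Waiter wins the game $(X,\mathcal F^*)$ if and only if, at the end of the game, Client's set meets every $A\in\mathcal F$; equivalently, if and only if Waiter never ends up owning all of some $A\in\mathcal F$. So it suffices to describe a Waiter strategy guaranteeing that no member of $\mathcal F$ is ever entirely contained in $E_W$. Set $\lambda=2^{1/(2q-1)}>1$, so that the hypothesis reads $\sum_{A\in\mathcal F}\lambda^{-|A|}<1/2$. At any point in the game call $A\in\mathcal F$ \emph{active} if $A\cap E_C=\emptyset$, put $\mathrm{dng}(A)=\lambda^{-|A\setminus E_W|}$ (so an active set already contained in $E_W$ contributes exactly $1$), and let $\Phi=\sum_{A\ \text{active}}\mathrm{dng}(A)$, while for a free element $e$ let $w(e)=\sum_{A\ \text{active},\ e\in A}\mathrm{dng}(A)$. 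Initially $\Phi<1/2$, and if Waiter keeps $\Phi<1$ throughout then he wins: at the end, a set of $\mathcal F$ missed by Client would be active with all of its elements in $E_W$, contributing $1$ to $\Phi$.

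Waiter's strategy is a balancing strategy: in each round he offers a set $D$ of $q+1$ free elements (or all remaining free elements, if fewer than $q+1$ are left, a case that only helps him and is handled directly) chosen so as to minimize $\max_{e\in D}\Phi'(e)$, where $\Phi'(e)$ is the potential that would result if Client took $e$. When Client takes $e\in D$, every active set through $e$ becomes inactive, which removes $w(e)$ from $\Phi$, while every other active set $A$ acquires $|A\cap D|\le q$ new Waiter-elements and so has its danger multiplied by $\lambda^{|A\cap D|}$. The crux of the proof is to show that this $\min\max$ value is always small enough that $\Phi$ never reaches $1$. One writes $\lambda^{|A\cap D|}-1$ as the telescoping sum $(\lambda-1)\sum_{j=0}^{|A\cap D|-1}\lambda^{j}$, regroups the resulting increase over the $q$ elements of $D\setminus\{e\}$ that pass to Waiter, and bounds it in terms of the $w$-values of the offered elements (each of which is at most $\Phi$); offering the heaviest available free elements, and using that $\lambda=2^{1/(2q-1)}$ is exactly tuned to this computation, one trades this increase against the weight $w(e)$ that Client is forced to destroy, the factor $1/2$ in the hypothesis providing the slack needed in the rounds where $\Phi$ cannot be made to strictly decrease. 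I expect this estimate --- pinning down a provably good offer $D$ and checking that the exponent $2q-1$ makes the bookkeeping close --- to be the main obstacle.

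A more black-box alternative is a reduction to Maker--Breaker games: if Maker had a winning strategy in the $(1:q)$ Maker--Breaker game on $(X,\mathcal F^*)$, then Waiter could offer Maker's move together with Breaker's $q$-element response, and whenever Client declined Maker's move one would charge all $q+1$ offered elements to Breaker (which only helps Breaker, contradicting that Maker wins); hence Client would be forced to follow Maker's strategy and build a member of $\mathcal F^*$. One could then invoke a biased Erd\H os--Selfridge-type criterion for this auxiliary game. Recovering precisely the base $2$ and the exponent $2q-1$ along this route is not immediate, however, so I would ultimately rely on the direct potential argument above.
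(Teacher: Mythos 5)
The paper does not contain a proof of Theorem~\ref{Bednarska}; it imports it from~\cite{Bednarska2013}, so there is no in-paper argument to compare against and yours is a free reconstruction. The framework you set up is the right species of argument (a weight-function argument, consistent with the title of the cited paper): reformulating Waiter's goal as ``never fully claim any $A \in \mathcal{F}$,'' introducing the danger $\lambda^{-|A\setminus E_W|}$ with $\lambda = 2^{1/(2q-1)}$, summing over active sets to get $\Phi$, and observing that keeping $\Phi < 1$ to the end of the game suffices are all sound. But the step you yourself flag as ``the main obstacle'' is a genuine gap rather than a routine calculation, and the candidate offer rule you gesture at (offer the $q+1$ heaviest free elements) does not close it. Already for $q=1$ (so $\lambda=2$): if Waiter offers $\{e_1,e_2\}$ with $w(e_1)\ge w(e_2)$ and Client takes the \emph{lighter} element $e_2$, the potential changes by $-w(e_2)+w(e_1)-w(e_1,e_2)$, where $w(e_1,e_2)$ denotes the total danger of active sets containing both elements; this is in general positive, and nothing in your sketch bounds the increase accumulated over the course of the game. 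The $1/2$ slack in the hypothesis does not automatically absorb it: one still has to exhibit a concrete offer rule (perhaps exploiting the cross-weights $w(e_i,e_j)$, or a modified potential) together with a round-by-round inequality controlling $\Phi$ against \emph{every} choice Client could make, and this is precisely where the exponent $2q-1$ has to earn its keep. None of that is supplied.

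The backup route via Maker--Breaker is also flawed as stated. In Maker--Breaker, Breaker's $q$ elements form an adversarial response made \emph{after} seeing Maker's single move, not a quantity the simulating Waiter can precompute and bundle into his offer; and if Client declines the element playing Maker's role, then in the simulation Maker has made no move that round, so the hypothetical Maker strategy is no longer being followed and its guarantee evaporates --- ``charging all $q+1$ elements to Breaker only helps Breaker'' does not repair this, since Maker also needs his own element to execute his plan. You are right to fall back on the direct potential argument, but as written that argument stops short of a proof.
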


Next, we state and prove a sufficient condition for Client to win biased Client-Waiter transversal games.

\begin{theorem} \label{ClientWinningTransversalGame}
Let $q$ be a positive integer, let $X$ be a finite set and let $\mathcal{F}$ be a family of subsets of $X$. If
$$
\sum_{A \in \mathcal{F}} \left(\frac{q}{q+1}\right)^{|A|} < 1,
$$
then Client has a winning strategy for the $(1:q)$ Client-Waiter game $(X, \mathcal{F}^*)$.
\end{theorem}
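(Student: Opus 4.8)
The plan is to follow the standard potential-function (Erd\H{o}s--Selfridge type) approach, adapted to Client-Waiter games. We want Client to claim at least one element from every $A \in \mathcal{F}$, i.e. to build a transversal of $\mathcal{F}$, which is precisely winning the game on $\mathcal{F}^*$. To this end I would assign to each $A \in \mathcal{F}$ a weight and track the total potential $\Phi = \sum_{A \in \mathcal{F} \,:\, A \cap E_C = \emptyset} w(A)$, summing only over those sets that Client has not yet ``hit''. A natural choice is $w(A) = \left(\frac{q}{q+1}\right)^{|A \cap E_F|}$ once the game has started (so that sets already touched by Client are dropped, and sets entirely swallowed by Waiter contribute $(q/(q+1))^{0} = 1$, flagging a loss). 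The hypothesis says the initial potential $\Phi_0 = \sum_{A \in \mathcal{F}} (q/(q+1))^{|A|}$ is strictly less than $1$.

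The key step is to show Client can play so that $\Phi$ is non-increasing. Suppose in some round Waiter offers a set $D$ of $d$ elements with $1 \le d \le q+1$. If Client claims $x \in D$, then every $A$ containing $x$ (and not yet hit) is removed from the sum, while for every other $A$ the quantity $|A \cap E_F|$ drops by $|A \cap (D \setminus \{x\})| \le |A \cap D|$, so $w(A)$ gets multiplied by $(q/(q+1))^{-|A \cap (D\setminus\{x\})|} = ((q+1)/q)^{|A \cap (D \setminus\{x\})|}$. Thus the change in potential after Client picks $x$ is
$$
\Delta_x = -\sum_{\substack{A \ni x \\ A \cap E_C = \emptyset}} w(A) + \sum_{\substack{A \not\ni x \\ A \cap E_C = \emptyset}} w(A)\left[\left(\tfrac{q+1}{q}\right)^{|A \cap (D\setminus\{x\})|} - 1\right].
$$
Client will choose the $x \in D$ minimizing $\Delta_x$; it suffices to show $\min_{x \in D} \Delta_x \le 0$, for which it is enough to show $\sum_{x \in D} \Delta_x \le 0$ (since then the minimum is at most the average, and $d \ge 1$). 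Here one uses the elementary inequality, valid for $t \ge 0$, that $\left(\frac{q+1}{q}\right)^{t} - 1 \le \frac{t}{q}\cdot\left(\frac{q+1}{q}\right)^{q}$ is \emph{not} quite what is needed; rather the clean route is: for a set $A$ with $A \cap E_C = \emptyset$ and $s := |A \cap D| \le d \le q+1$, sum its contribution over all $x \in D$. It contributes $-w(A)$ for each of the $s$ choices $x \in A \cap D$ and $w(A)[((q+1)/q)^{|A\cap D| - [x \in A]} - 1]$ otherwise; a short computation shows the total over $x \in D$ is $w(A)\bigl( d - s + s\cdot((q+1)/q)^{s-1} - d\cdot((q+1)/q)^{s}\bigr)$ wait --- I will instead verify directly that this bracket is $\le 0$ for all integers $0 \le s \le d \le q+1$, which is the one genuinely computational lemma. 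Summing over $A$ then gives $\sum_{x\in D}\Delta_x \le 0$, hence Client has a legal move keeping $\Phi$ from increasing.

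Finally, I would close the argument: since $\Phi_0 < 1$ and $\Phi$ never increases under Client's strategy, we have $\Phi < 1$ at the end of the game. But if Client had failed to hit some $A \in \mathcal{F}$, then at the end of the game $A \cap E_C = \emptyset$ and $A \cap E_F = \emptyset$ (all edges are claimed), so $A$ contributes $w(A) = (q/(q+1))^0 = 1$ to $\Phi$, forcing $\Phi \ge 1$, a contradiction. Hence Client hits every $A \in \mathcal{F}$ and wins the $(1:q)$ Client-Waiter game $(X, \mathcal{F}^*)$. The main obstacle is the single real-analytic inequality $d - s + s((q+1)/q)^{s-1} - d((q+1)/q)^{s} \le 0$ for integers $0 \le s \le d \le q+1$ (equivalently, after dividing by $((q+1)/q)^{s-1}$ and rearranging, a convexity/monotonicity estimate in $s$); I expect it can be handled by checking it is non-positive at $s=0$ and that its discrete derivative in $s$ has the right sign, or by bounding $((q+1)/q)^{s} \ge 1 + s/q$ against a matching lower-order expansion of the other term.
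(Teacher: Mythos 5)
Your proposal is correct in spirit but takes a genuinely different route from the paper. The paper's proof is probabilistic and remarkably short: Client plays uniformly at random, the probability of missing a fixed $A\in\mathcal{F}$ against any fixed Waiter strategy is at most $\left(\frac{q}{q+1}\right)^{|A|}$ (each round Client survives with probability $1-a_i/z_i \le (q/(q+1))^{a_i}$), a union bound shows Client loses with probability $<1$, and since Client-Waiter games are finite, perfect-information, no-chance, no-draw games, Zermelo's theorem upgrades ``positive probability of winning against every Waiter strategy'' to ``Client has a winning strategy.'' Your approach is the deterministic Erd\H{o}s--Selfridge-style derandomization of this, tracking the potential $\Phi=\sum_{A:\,A\cap E_C=\emptyset}(q/(q+1))^{|A\cap E_F|}$ and showing Client can keep it non-increasing by an averaging argument. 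What this buys you is an explicit constructive strategy rather than a pure existence proof; what it costs is the extra computational lemma, and the game-tree determinacy step that the paper relies on is replaced by direct bookkeeping.

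One concrete error to flag: your formula for the per-$A$ contribution to $\sum_{x\in D}\Delta_x$ is incorrect. With $s=|A\cap D|$ and $r=(q+1)/q$, the correct total is
$$
-s\,w(A) + (d-s)\,w(A)\big(r^{s}-1\big) \;=\; w(A)\big[(d-s)\,r^{s}-d\big],
$$
not $w(A)\big(d-s+s\,r^{s-1}-d\,r^{s}\big)$ as you wrote. The needed computational lemma is therefore the cleaner inequality $(d-s)\left(\frac{q+1}{q}\right)^{s}\le d$ for integers $0\le s\le d\le q+1$, which does hold: the function $g(s)=(d-s)r^{s}$ has $g'(s)=r^{s}[(d-s)\ln r-1]$, and since $\ln r\le 1/q$ we get that the critical point is $\le d-q\le 1$, so over the integers the maximum is $\max\{g(0),g(1)\}=\max\{d,(d-1)r\}$, and $(d-1)(q+1)/q\le d$ is exactly $d\le q+1$. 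With this corrected bracket, your averaging argument ($\min_{x\in D}\Delta_x\le\frac{1}{d}\sum_{x\in D}\Delta_x\le 0$) and endgame contradiction go through and give a complete alternative proof.
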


\begin{remark}
It is not hard to adapt Beck's winning criterion for Breaker in biased Maker-Breaker games (see \cite{TTT}) to prove that, if $\sum_{A \in \mathcal{F}} 2^{- |A|/q} < 1$, then Client has a winning strategy for the $(1:q)$ Client-Waiter game $(X, \mathcal{F}^*)$. However, note that Theorem~\ref{ClientWinningTransversalGame} does not provide a weaker result since $q/(q+1)\leqslant 2^{-1/q}$ for every $q\geqslant 1$, with equality if and only if $q=1$.
\end{remark}

\noindent \emph{Proof of Theorem~\ref{ClientWinningTransversalGame}}.
Client will play randomly, that is, in each round he will choose one of the elements Waiter offers him uniformly at random, independently of all previous choices. Since Client-Waiter games are finite, perfect information games with no chance moves and no draws, in order to prove that Client has a winning strategy, it suffices to show that, given any fixed strategy ${\mathcal S}_W$ of Waiter, 
$$
\mathbb{P}[\text{Client loses }(X, \mathcal{F}^*) \,\, | \text{ Waiter follows } {\mathcal S}_W] < 1.
$$  

Fix some strategy ${\mathcal S}_W$ of Waiter and a set $A \in \mathcal{F}^*$. Given that Waiter plays according to ${\mathcal S}_W$, let $r$ denote the total number of rounds played in the game and, for every $1 \leqslant i \leqslant r$, let $Z_i$ denote the set of elements Waiter offers Client in the $i$th round, let $z_i = |Z_i|$ and let $a_i = |A \cap Z_i|$. Note that $r$, $z_i$ and $a_i$ might depend on Client's random choices. For every $1 \leqslant i \leqslant r$, given $z_i$ and $a_i$, the probability that Client claims an element of $A$ in the $i$th round is $a_i/z_i$, independently of his previous choices. Hence, the probability that Client does not claim any element of $A$ throughout the game is 
$$
\prod_{i=1}^r \left(1 - \frac{a_i}{z_i}\right) \leqslant \prod_{i=1}^r \left(1 - \frac{a_i}{q+1}\right) \leqslant \prod_{i=1}^r \left(1 - \frac{1}{q+1}\right)^{a_i} = \left(\frac{q}{q+1}\right)^{|A|} \,,
$$
where the second inequality holds by Bernoulli's inequality.       

Taking a union bound over the elements of $\mathcal{F}$, we conclude that
$$
\mathbb{P}[\text{Client loses }(X,\mathcal{F}^*) \,\, | \text{ Waiter follows } {\mathcal S}_W] \leqslant \sum_{A \in \mathcal{F}} \left(\frac{q}{q+1}\right)^{|A|} < 1 \,,
$$
as claimed.
{\hfill $\Box$ \medskip\\}

A simple step in Waiter's strategy to force Client to build a Hamilton cycle, is to force him to quickly build a graph with large minimum degree. Our next result shows that this is indeed possible.

\begin{lemma} \label{lem::largeMinDeg}
Let $G$ be a graph on $n$ vertices with minimum degree $\delta(G)=\delta$ and let $q$ and $\gamma \leqslant \left\lfloor \frac{\delta}{2(q+1)} \right\rfloor$ be positive integers. When playing a $(1 : q)$ Waiter-Client game on $E(G)$, Waiter has a strategy to force Client to build a spanning subgraph of $G$ with minimum degree at least $\gamma$, by offering at most $(q+1) \gamma n$ edges of $G$.
\end{lemma}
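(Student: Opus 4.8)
The plan is to force Client's degree up at each vertex one unit at a time by pairing edges cleverly. Think of the target as $\gamma$ "rounds" of degree-boosting: in round $j$ (for $j = 1, \dots, \gamma$), Waiter ensures that every vertex which currently has Client-degree $< j$ acquires at least one more Client-edge incident to it. The key device is the classical box-game / pairing idea: since $\gamma \le \lfloor \delta/(2(q+1)) \rfloor$, every vertex $v$ starts with $d_G(v) \ge \delta \ge 2(q+1)\gamma$ edges, so there is plenty of room to keep finding unoffered edges at $v$ throughout all $\gamma$ rounds. Within a single round, Waiter processes the vertices needing attention one by one; for a vertex $v$ that still needs an edge, Waiter selects $v$ together with $q$ other "needy" vertices (or arbitrary vertices with spare free edges, if fewer than $q$ needy vertices remain) and offers one free edge at each of these $q+1$ vertices. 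Whichever edge Client takes, its (at most two) endpoints have their Client-degree incremented; the remaining $q$ edges go to Waiter but are incident to vertices Waiter will revisit, so no harm is done.

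The main steps, in order, are: (i) set up the invariant — at the start of round $j$, every vertex has Client-degree $\ge j-1$ — and observe it holds vacuously for $j=1$; (ii) inside round $j$, maintain a list $U_j$ of vertices with Client-degree exactly $j-1$, and repeatedly: pick $v \in U_j$, pick up to $q-1$ further vertices from $U_j$ (plus, if needed, arbitrary vertices still having a free incident edge) to form a set $S$ with $|S| = q+1$, for each $u \in S$ choose one free edge $e_u$ incident to $u$, offer $\{e_u : u \in S\}$ to Client, then remove from $U_j$ every vertex whose Client-degree has just reached $j$; (iii) check that a free edge $e_u$ incident to $u$ always exists when $u$ is chosen — this is where $\gamma \le \lfloor \delta/(2(q+1)) \rfloor$ is used: $u$ has been "touched" in at most $\gamma$ rounds as a primary vertex and appears in at most $2q$ offered edges per round as a secondary endpoint, so at most $2(q+1)\gamma \le \delta \le d_G(u)$ edges at $u$ have ever been offered, and we arrange the bookkeeping so this stays strictly below $d_G(u)$ until $u$ no longer needs attention; (iv) conclude that after $\gamma$ rounds every vertex has Client-degree $\ge \gamma$; (v) count the offered edges: each of the $\gamma$ rounds offers at most $n$ sets of $q+1$ edges (one set per "primary" vertex, and each vertex is primary at most once per round since it leaves $U_j$ once it hits degree $j$), giving at most $(q+1)\gamma n$ edges in total.

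The step I expect to be the main obstacle is the careful accounting in (iii): one must argue that when Waiter is forced to pad an offered $(q+1)$-set with non-needy vertices (because $U_j$ has run dry but Client-degrees at some padding vertices could already be large), there is still a free edge to offer at each chosen vertex, and that this padding never pushes the total offered-edge count at any vertex past its degree prematurely. A clean way to sidestep part of this is to let Waiter, whenever $|U_j| \ge 1$ but $|U_j| < q+1$, simply pad with the smallest-degree available vertices, or — even cleaner — to allow Waiter to offer fewer than $q+1$ edges is \emph{not} permitted in Waiter-Client games, so instead one observes that at any moment, as long as some vertex still has Client-degree $< \gamma$, the graph $G_F$ of free edges is nonempty and in fact has at least $q+1$ edges (since removing $\le (q+1)\gamma n$ edges from $G$, which has $\ge \delta n / 2 \ge (q+1)\gamma n$ edges, leaves a nonnegative number, and a sharper count leaves enough), so Waiter can always complete a legal offer; the edges used for padding are incident to vertices that still have free edges by the degree count, and Client's choice among them only ever helps. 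Writing this union-of-cases argument tightly, while keeping the $(q+1)\gamma n$ bound, is the delicate part; everything else is routine.
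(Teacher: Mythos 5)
Your round-robin approach is genuinely different from the paper's, and it has a real gap exactly where you flag it. The paper instead uses an Eulerian orientation: it adds a dummy vertex $v^*$ to make all degrees even, fixes an Eulerian orientation of the resulting graph, and lets $E(u_i)$ be the set of edges of $G$ oriented \emph{out of} $u_i$. These sets are pairwise disjoint and each has size at least $\lfloor \delta/2 \rfloor \geqslant (q+1)\gamma$. Waiter then simply handles the vertices one at a time: for each $u_i$ he spends $\gamma$ consecutive turns offering $q+1$ free edges from $E(u_i)$; every offered edge is incident to $u_i$, so Client's choice is forced to raise $d_{G_C}(u_i)$ by one per turn, and disjointness of the reserves means no offer ever touches another vertex's stock. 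There is nothing to balance.

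In your scheme, the two key numerical claims in step (iii) do not hold. First, ``each vertex is primary at most once per round since it leaves $U_j$ once it hits degree $j$'' is wrong: being offered does not make a vertex leave $U_j$, because Client may take an edge at a \emph{different} vertex in the same $(q+1)$-set, leaving the primary vertex's Client-degree unchanged; so a vertex can be primary many times in one round, or never get its increment at all. Second, ``appears in at most $2q$ offered edges per round as a secondary endpoint'' has no justification: across the up-to-$n$ iterations of a single round a vertex can be the other endpoint of many offered edges. Concretely, take $q=1$, $\gamma=1$, $\delta=4$: Waiter offers $\{e_u, e_v\}$ with $u,v$ needy; Client always picks the edge \emph{not} at $v$. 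After four such offers $v$ has no free incident edges and Client-degree $0$. The only escape is for Waiter to offer two edges at $v$ simultaneously once $v$ is close to running dry --- which your scheme (one edge per vertex in $S$) never does --- and even with that fix, bounding the edges burned at a vertex by offers that ``merely pass through'' it remains an unresolved bookkeeping problem. The Eulerian-orientation partition removes that problem at the source, which is why the paper's proof of the same $(q+1)\gamma n$ bound is so short.
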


\begin{proof}
Let $u_1, \ldots, u_n$ denote the vertices of $G$. We define a new graph $G^*$, where $G^* = G$ if $d_G(u_i)$ is even for every $1 \leqslant i \leqslant n$, and otherwise $G^*$ is the graph obtained from $G$ by adding a new vertex $v^*$ and connecting it to every vertex of odd degree in $G$. Since all degrees of $G^*$ are even, it admits an Eulerian orientation $\overrightarrow{G^*}$. For every $1 \leqslant i \leqslant n$, let $E(u_i) = \{u_i u_j \in E(G) : u_i u_j \text{ is directed from } u_i \text{ to } u_j \text{ in } \overrightarrow{G^*}\}$. It is evident that $|E(u_i)| \geqslant \lfloor \delta/2 \rfloor \geqslant (q+1) \gamma$ for every $1 \leqslant i \leqslant n$ and that the sets $E(u_1), \ldots, E(u_n)$ are pairwise disjoint. 

For every $1 \leqslant i \leqslant n$ and every $1 \leqslant j \leqslant \gamma$, in the $\left((i-1) \gamma + j \right)$th round of the game, Waiter offers Client $q+1$ arbitrary free edges of $E(u_i)$. It is evident that, after offering at most $(q+1) \gamma$ edges of $E(u_i)$ for every $1 \leqslant i \leqslant n$, the minimum degree of Client's graph is at least $\gamma$.    
\end{proof}

The rest of this section is devoted to a Client-Waiter version of the so-called \emph{Box Game}. The Maker-Breaker version of this game was introduced by Chv\'atal and Erd\H{o}s in their seminal paper~\cite{CE} and was subsequently fully analyzed by Hamidoune and Las Vergnas in~\cite{HL}. The version of the box game we are interested in, which we will refer to as the $(1 : q)$ Client-Waiter box game, is defined as follows. Let ${\mathcal F} = \{A_1, \ldots, A_n\}$ be a family of pairwise disjoint sets such that $t-1 \leqslant |A_1| \leqslant \ldots \leqslant |A_n| = t$. We refer to such a family as being \emph{canonical} of type $t$. The box game on ${\mathcal F}$ is simply the $(1 : q)$ Client-Waiter game $(\bigcup_{i=1}^n A_i, {\mathcal F}^*)$. Note that Waiter wins the $(1 : q)$ Client-Waiter box game on ${\mathcal F}$ if and only if he is able to claim all elements of some $A_i$. 

Suppose that at some point during the box game on ${\mathcal F}$, Client claims an element of $A_i$ for some $1 \leqslant i \leqslant n$. Since Waiter can no longer claim all elements of $A_i$, neither player has any incentive to claim more elements of $A_i$. Therefore, we can pretend that $A_i$ was removed from ${\mathcal F}$. If on the other hand, Waiter claims an element $a \in A_i$, then we can pretend that instead of trying to fully claim $A_i$, his goal is now to fully claim $A_i \setminus \{a\}$. Hence, we can view the family ${\mathcal F}$, on which the game is played, as changing throughout the game as follows. Assume that ${\mathcal F}_i$ denotes the (multi) family representing the game immediately before the $i$th round; in particular ${\mathcal F}_1 = {\mathcal F}$. Let $W_i$ denote the set of elements Waiter offers Client in the $i$th round, let $c_i \in W_i$ denote the element claimed by Client and let $j$ denote the unique integer for which $c_i \in A_j$. Then we define ${\mathcal F}_{i+1} = \{A \setminus W_i : A \in {\mathcal F}_i \text{ and } A \neq A_j\}$. Using this point of view, we see that Waiter wins the $(1 : q)$ Client-Waiter box game on ${\mathcal F}$ if and only if $\emptyset \in {\mathcal F}_i$ for some positive integer $i$.             

\begin{proposition} \label{BoxGame}
Let $q$ and $t$ be positive integers and let ${\mathcal F}$ be a canonical family of type $t$. If $|{\mathcal F}| \geqslant 2(q+1)^{t+1}/q^t$, then Waiter has a winning strategy for the $(1 : q)$ Client-Waiter box game on ${\mathcal F}$.
\end{proposition}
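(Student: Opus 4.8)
The plan is to induct on $t$, analyzing how the box game evolves round by round under the reformulation given above, where $\mathcal{F}_i$ tracks the "remaining" sets and Waiter wins iff $\emptyset \in \mathcal{F}_i$ for some $i$. The base case $t = 1$ is essentially trivial: a canonical family of type $1$ consists of singletons (and possibly empty sets, but if $\emptyset \in \mathcal{F}$ then Waiter has already won), so $|\mathcal{F}| \geqslant 2(q+1)^2/q \geqslant 1$ means Waiter simply offers one of these singletons and, regardless of what Client does, on the next relevant round keeps offering singletons; since Client can only "kill" one box per round and boxes are singletons, Waiter claims all of some box unless Client claims every box first — but the count $2(q+1)^2/q$ comfortably exceeds $1$, and in fact one needs the family large enough that Client cannot exhaust it. (More carefully: with singletons, each of Waiter's offers of $q+1$ boxes lets Client kill one and Waiter claim $q$; Waiter wins as soon as he claims one, which happens in the very first round, so any nonempty family suffices here. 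The slack in the bound is for the inductive step.)

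For the inductive step, assume the statement for $t - 1$ and let $\mathcal{F}$ be canonical of type $t$ with $|\mathcal{F}| = n \geqslant 2(q+1)^{t+1}/q^t$. Waiter's strategy: repeatedly offer $q+1$ elements, one each from $q+1$ distinct boxes of current maximum size $t$ (this is possible as long as there are at least $q+1$ such boxes). In each such round, Client destroys one box entirely and Waiter shrinks $q$ boxes of size $t$ down to size $t-1$. After $k$ such rounds, the number of boxes is $n - k(q+1) + (\text{boxes still untouched})$... more precisely, starting from $n$ boxes, after $k$ rounds Client has removed $k$ boxes and $qk$ boxes have been reduced from size $t$ to size $t-1$. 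Waiter continues until fewer than $q+1$ boxes of size $t$ remain; say this happens after $k$ rounds. At that point the boxes of size $\leqslant t-1$ form a canonical family of type $t-1$ of size at least $qk$ (the ones Waiter shrank), and we want to choose the stopping point so that $qk \geqslant 2(q+1)^{t}/q^{t-1}$, whence the induction hypothesis applies and Waiter wins the residual game on these boxes. The constraint is that Waiter can keep going for $k$ rounds provided at each round there remain $\geqslant q+1$ full (size-$t$) boxes; since each round removes $q+1$ boxes from the "size-$t$" pool ($1$ killed by Client, $q$ shrunk by Waiter), this is possible as long as $k(q+1) \leqslant n - q$, i.e. roughly $k \leqslant n/(q+1)$.

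So I want $k$ with $2(q+1)^{t}/q^{t} \leqslant k \leqslant n/(q+1) - 1$ (taking $k = \lceil 2(q+1)^t/q^t \rceil$ and checking $qk \geqslant 2(q+1)^t/q^{t-1}$, which is immediate). Such a $k$ exists precisely when $2(q+1)^{t}/q^{t} \leqslant n/(q+1) - 1$, i.e. $n \geqslant (q+1)\big(2(q+1)^t/q^t + 1\big)$; and one checks this follows from the hypothesis $n \geqslant 2(q+1)^{t+1}/q^t$ since $2(q+1)^{t+1}/q^t = (q+1)\cdot 2(q+1)^t/q^t \geqslant (q+1)(2(q+1)^t/q^t + 1)$ as soon as $2(q+1)^t/q^t \geqslant 1$, which holds for all $q, t \geqslant 1$. (A small subtlety: I should be a touch careful about whether Client might choose to destroy a box of size $< t$ instead of a size-$t$ box, and about boxes that start at size $t-1$ — but destroying smaller boxes only helps Waiter, since it preserves the size-$t$ pool, and the initial size-$(t-1)$ boxes only add to the residual type-$(t-1)$ family; so the worst case for Waiter is Client always killing a size-$t$ box and all boxes starting at size $t$, which is what the bound above handles.)

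The main obstacle is bookkeeping the exact evolution of the multiset of box sizes and pinning down the stopping time so that the residual family is genuinely canonical of type $t-1$ and large enough — in particular making sure the arithmetic $n \geqslant 2(q+1)^{t+1}/q^t \Rightarrow qk \geqslant 2(q+1)^t/q^{t-1}$ closes cleanly with the chosen $k$, and correctly handling the degenerate rounds at the end where fewer than $q+1$ full boxes remain. Everything else is routine.
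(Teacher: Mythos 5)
The overall plan is the same as the paper's: Waiter always offers from the maximum-size boxes, and one tracks how the number of boxes decays as the type drops from $t$ to $t-1$ to $\ldots$ to $0$. But your specific induction — taking the statement of the proposition itself, with $t$ replaced by $t-1$, as the inductive hypothesis — does not close, and the algebraic check you offer for it is incorrect. You write
\[
2(q+1)^{t+1}/q^t = (q+1)\cdot 2(q+1)^t/q^t \geqslant (q+1)\bigl(2(q+1)^t/q^t + 1\bigr),
\]
which after dividing by $(q+1)$ reads $X \geqslant X+1$ with $X = 2(q+1)^t/q^t$; this is false for every $q,t$. More substantively, the bound $2(q+1)^{t+1}/q^t$ is not self-reproducing under one peeling step. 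After clearing the size-$t$ layer, the residual family has (in the worst case) about $q\lfloor |\mathcal F|/(q+1)\rfloor$ boxes, which can fall strictly below $2(q+1)^t/q^{t-1}$. A concrete instance: $q=4$, $t=2$, $|\mathcal F|=16 \geqslant 2\cdot 5^3/4^2 = 15.625$, yet after at most three full rounds the shrunk boxes number $qk = 12$, which is less than $2\cdot 5^2/4 = 12.5$, so the inductive hypothesis you wish to invoke does not apply (even though, of course, Waiter easily wins that residual position — the point is that your inductive statement is too weak to certify it).

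The paper avoids this by strengthening the quantity being tracked: instead of re-invoking the proposition at type $t-1$, it proves by reverse induction on $j$ the explicit bound
\[
|\mathcal F_{i_j}| \geqslant \left(\tfrac{q}{q+1}\right)^{t-j}|\mathcal F| - (q+1)\Bigl(1 - \left(\tfrac{q}{q+1}\right)^{t-j}\Bigr),
\]
where the recursion $|\mathcal F_{i_{j-1}}| \geqslant \tfrac{q}{q+1}|\mathcal F_{i_j}| - 1$ is exactly the per-layer loss you identified. The subtracted term carries the accumulated "$-1$ per layer" correctly, and only at the very end (at $j=0$) is the hypothesis $|\mathcal F| \geqslant 2(q+1)^{t+1}/q^t$ used, where the factor $2$ provides precisely the slack needed to conclude $|\mathcal F_{i_0}|\geqslant 1$. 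This is the standard remedy when an induction fails to close as stated: strengthen the inductive claim. Your write-up identifies the right strategy for Waiter and the right per-round loss, but stops short of the quantitative strengthening that actually makes the argument go through.
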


\begin{remark}
In light of Theorem~\ref{ClientWinningTransversalGame}, Proposition~\ref{BoxGame} is not far from being best possible.
\end{remark}

\noindent \emph{Proof of Proposition~\ref{BoxGame}}.
Waiter plays so as to keep the families ${\mathcal F}_i$ canonical; this is achieved as follows. For every positive integer $i$, let $t_i = \max \{|A| : A \in {\mathcal F}_i\}$, let ${\mathcal L}_i = \{A \in {\mathcal F}_i : |A| = t_i\}$ and let $\ell_i = |{\mathcal L}_i|$. In the $i$th round, Waiter offers Client an arbitrary set $W_i \subseteq \bigcup_{A \in {\mathcal L}_i} A$ of size $\min \{q+1, \ell_i\}$ such that $|A \cap W_i| \leqslant 1$ for every $A \in {\mathcal L}_i$. We claim that this is a winning strategy for Waiter.

For every $0 \leqslant j \leqslant t$, let $i_j$ denote the smallest integer such that ${\mathcal F}_{i_j}$ is canonical of type $j$ (to make this well-defined, we view the empty family as being canonical of every type). In particular, $i_t = 1$ and, in order to prove our claim, it suffices to show that $|{\mathcal F}_{i_0}| \geqslant 1$. We will in fact prove a more general claim, namely, that
\begin{equation} \label{eq4}
|{\mathcal F}_{i_j}| \geqslant \left(\frac{q}{q+1}\right)^{t-j} |{\mathcal F}| - (q+1) \left(1 - \left(\frac{q}{q+1}\right)^{t-j}\right),
\end{equation}
holds for every $0 \leqslant j \leqslant t$. This is indeed a more general result as, in particular, it follows from~\eqref{eq4} that 
$$
|{\mathcal F}_{i_0}| \geqslant \left(\frac{q}{q+1}\right)^t \cdot \frac{2 (q+1)^{t+1}}{q^t} - (q+1) + \frac{q^t}{(q+1)^{t-1}}\\
= (q+1) \left(\left(\frac{q}{q+1}\right)^t + 1\right) \geqslant 1,
$$
where the first inequality follows from our assumption that $|{\mathcal F}| \geqslant 2(q+1)^{t+1}/q^t$.

We prove~\eqref{eq4} by reverse induction on $j$. The base case $j = t$ holds trivially. Assume that~\eqref{eq4} holds for some $1 \leqslant j \leqslant t$; we prove it holds for $j-1$ as well. It follows by Waiter's strategy that $i_{j-1} \leqslant i_j + \lceil |{\mathcal F}_{i_j}|/(q+1) \rceil$. Since, moreover, Client claims exactly one offered element per round, we conclude that    
\begin{align*}
|{\mathcal F}_{i_{j-1}}| &\geqslant |{\mathcal F}_{i_j}| - \left\lceil\frac{|{\mathcal F}_{i_j}|}{q+1}\right\rceil \geqslant \frac{q}{q+1} |{\mathcal F}_{i_j}| - 1 \\
&\geqslant \frac{q}{q+1} \left[\left(\frac{q}{q+1}\right)^{t-j} |{\mathcal F}| - (q+1) \left(1 - \left(\frac{q}{q+1}\right)^{t-j}\right)\right] - 1\\
&= \left(\frac{q}{q+1}\right)^{t-j+1} |{\mathcal F}| - (q+1) \left(1 - \left(\frac{q}{q+1}\right)^{t-j+1}\right).
\end{align*}
{\hfill $\Box$ \medskip\\}

\section{Properties of Random Graphs} \label{sec::Gnp}

In this section we will prove several technical results about the binomial random graph $\mathcal{G}(n,p)$ for various edge probabilities $p$. These results will be useful in the following three sections and, in particular, in the proofs of Theorems~\ref{WCHamilton} and~\ref{CWHamilton}.

\begin{lemma} \label{NumberOfEdgesInSmallSets}
Let $G \sim \mathcal{G}(n,p)$, where $p = c \log n/n$ for some constant $c > 0$ and let $t = t(n)$ be such that $\lim_{n \rightarrow \infty} t \log n = \infty$. Then a.a.s. we have $e_G(A) \leqslant 2 c t |A| \log n$ for every $A \subseteq V(G)$ of size $1 \leqslant |A| \leqslant t n$.     
\end{lemma}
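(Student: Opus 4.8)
The plan is to bound the expected number of "dense" sets and show that a.a.s.\ none appear, via a first-moment (union bound) argument over all set sizes. Fix a size $1 \le s \le tn$. For a fixed set $A$ with $|A| = s$, the quantity $e_G(A)$ is distributed as $\mathrm{Bin}\left(\binom{s}{2}, p\right)$, so its mean is at most $s^2 p / 2 = c s^2 \log n / (2n)$. We want to control $\mathbb{P}[e_G(A) \ge 2cts\log n]$. Write $k = 2cts\log n$ for the target threshold; note that since $s \le tn$ we have $k = 2cts\log n \ge 2 \cdot \mathbb{E}[e_G(A)] \cdot (tn/s) \ge 2\,\mathbb{E}[e_G(A)]$ when $s \le tn$, i.e.\ the threshold always exceeds twice the mean, which is exactly the regime where a crude upper tail bound for the binomial is efficient. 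Concretely, using $\mathbb{P}[\mathrm{Bin}(N, p) \ge k] \le \binom{N}{k} p^k \le \left(\frac{eNp}{k}\right)^k$ with $N = \binom{s}{2} \le s^2/2$, we get
$$
\mathbb{P}[e_G(A) \ge k] \le \left(\frac{e s^2 p / 2}{2cts\log n}\right)^k = \left(\frac{e s}{4tn}\right)^k,
$$
after substituting $p = c\log n/n$ and $k = 2cts\log n$.

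Next I would take the union bound over all $\binom{n}{s} \le \left(\frac{en}{s}\right)^s$ choices of $A$, and sum over $1 \le s \le tn$. The $s$-th term is at most
$$
\left(\frac{en}{s}\right)^s \left(\frac{es}{4tn}\right)^{2cts\log n}.
$$
Taking logarithms, this is $\exp\!\left(s\left[\log(en/s) + 2ct\log n \cdot \log(es/(4tn))\right]\right)$. Since $s \le tn$, we have $es/(4tn) \le e/4 < 1$, so $\log(es/(4tn)) \le \log(e/4) = 1 - \log 4 < -1/4$. Hence the bracket is at most $\log(en/s) - (ct\log n)/2$. The term $\log(en/s)$ is at most $\log(en) \le 2\log n$ for $n$ large, whereas $(ct\log n)/2 \to \infty$ relative to any fixed multiple of $\log n$ precisely because $t\log n \to \infty$ — wait, more carefully: $ct\log n/2$ need not dominate $2\log n$ if $t$ is a small constant. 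The cleaner route: use the hypothesis $t\log n \to \infty$ to say that for $n$ large, $(ct \log n)/2 \ge 4\log n \ge 2\log(en/s)$, so the bracket is at most $-\log(en/s) \le -\log n < 0$, whence the $s$-th term is at most $e^{-s\log n} \le n^{-s}$. Summing over $s \ge 1$ gives $\sum_{s\ge 1} n^{-s} = O(1/n) = o(1)$.

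I expect the main (though modest) obstacle to be handling the range of $s$ correctly: when $s$ is comparable to $tn$, the binomial $N = \binom{s}{2}$ is large and the threshold $k$ is only a constant factor above the mean, so one must be sure the $\left(\frac{es}{4tn}\right)$ base is bounded away from $1$ — which it is, since $s/(tn) \le 1$ forces the base $\le e/4 < 1$ — and must confirm the exponent $k = 2cts\log n$ is large enough to beat $\binom{n}{s}$; this is where the assumption $t\log n \to \infty$ enters, ensuring $k \gg s\log n$. One should also double-check the edge case where $\binom{s}{2}$ could be smaller than $k$ (so the probability is literally $0$), which only helps. The rest is routine estimation of binomial coefficients and a geometric-series sum, so I would not belabor those computations.
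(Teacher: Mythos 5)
Your union bound over sets $A$ of each size $s$, together with the binomial tail bound $\mathbb{P}[\mathrm{Bin}(N,p)\geqslant k]\leqslant\binom{N}{k}p^k\leqslant (eNp/k)^k$ with $N=\binom{s}{2}$ and $k=2cts\log n$, is exactly the paper's calculation; the resulting $s$th term $\left(\frac{en}{s}\right)^s\left(\frac{es}{4tn}\right)^{2cts\log n}$ matches the paper's display term for term. The structural approach is the same.

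The gap is in the final verification that the sum is $o(1)$. Having bounded $\log\!\left(\frac{es}{4tn}\right)\leqslant -1/4$ uniformly in $s$, you reduce the inner bracket to $\leqslant \log(en/s)-(ct\log n)/2$ and then assert that, for $n$ large, $(ct\log n)/2\geqslant 4\log n$. That inequality is equivalent to $ct\geqslant 8$, which is \emph{not} a consequence of $t\log n\to\infty$: take $t=1/100$ and $c=1$, for instance, which satisfies the hypothesis but never satisfies $ct\geqslant 8$. You in fact flag exactly this danger in the preceding clause (``$ct\log n/2$ need not dominate $2\log n$ if $t$ is a small constant''), but the proposed ``cleaner route'' repeats the identical unjustified step. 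The root cause is that the uniform bound $\log(es/(4tn))\leqslant -1/4$ is essentially tight only near $s=tn$ and is far too generous for small $s$, where the logarithm is close to $-\log n$; it is precisely this $s$-dependence that must be retained. A correct finish rewrites the bracket as $1+\log(n/s)+2ct\log n\,\big(1-\log(n/s)-\log(4t)\big)$, notes that once $2ct\log n>1$ this is an increasing function of $s$ and hence is maximised at $s=tn$ where it equals $1-\log t+2ct\log n(1-2\log 2)$, and then shows this quantity tends to $-\infty$ so that the geometric sum collapses; that last step is exactly what the paper's unelaborated ``$=o(1)$'' is asserting, and it is the delicate part your simplification cannot reproduce.
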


\begin{proof}
\begin{align*}
& \mathbb{P}\left[\exists A \subseteq V(G) \text{ such that } 1 \leqslant |A| \leqslant t n \text{ and } e_G(A) \geqslant 2 c t |A| \log n \right] \\
&\leqslant \sum_{a=1}^{t n} \binom{n}{a} \binom{\binom{a}{2}}{2 c t a \log n} p^{2 c t a \log n} \leqslant \sum_{a=1}^{t n} \left(\frac{en}{a}\right)^a \left(\frac{e \binom{a}{2} p}{2 c t a \log n}\right)^{2 c t a \log n}
\leqslant \sum_{a=1}^{t n} \left[\frac{en}{a} \cdot \left(\frac{e a}{4 t n}\right)^{2 c t \log n}\right]^a \\
&= \sum_{a=1}^{t n} \left[\exp\left\{1 + \log\left(n/a \right) + 2 c t \log n \left(1 - \log\left(n/a \right) - \log (4 t)\right) \right\} \right]^a = o(1).
\end{align*}
\end{proof}

\begin{lemma} \label{EdgesBetweenBigSets}
Let $G \sim \mathcal{G}(n,p)$ and let $k = k(n)$ be an integer satisfying $k p \geqslant 100 \log (n/k)$. Then a.a.s. $e_G(X, Y) \geqslant k^2 p/2$ holds for any pair of disjoint sets $X, Y \subseteq V(G)$ of size $|X| = |Y| = k$.
\end{lemma}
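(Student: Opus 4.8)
The plan is to prove Lemma~\ref{EdgesBetweenBigSets} by a straightforward first moment (union bound) argument over all choices of the pair of disjoint $k$-sets, using the Chernoff bound (Theorem~\ref{Chernoff}) to control the lower tail of $e_G(X,Y)$ for each fixed pair. First I would fix disjoint sets $X, Y \subseteq V(G)$ with $|X| = |Y| = k$. The random variable $e_G(X,Y)$ is distributed as $\mathrm{Bin}(k^2, p)$, with mean $k^2 p$. Applying Theorem~\ref{Chernoff}(i) with $a = 1/2$ gives
\[
\mathbb{P}\left[e_G(X,Y) < k^2 p/2\right] < \exp\left(-\frac{k^2 p}{8}\right).
\]

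Next I would take a union bound over all pairs $(X,Y)$. The number of such pairs is at most $\binom{n}{k}^2 \leqslant \left(\frac{en}{k}\right)^{2k}$. So the probability that some pair violates the bound is at most
\[
\left(\frac{en}{k}\right)^{2k} \exp\left(-\frac{k^2 p}{8}\right) = \exp\left(2k \log(en/k) - \frac{k^2 p}{8}\right).
\]
To show this tends to $0$, it suffices that $\frac{k^2 p}{8}$ dominates $2k\log(en/k)$, i.e. that $kp \geqslant 16 \log(en/k) = 16(\log(n/k) + 1)$, which is comfortably implied by the hypothesis $kp \geqslant 100\log(n/k)$ (note $\log(n/k) \geqslant \log 2 > 0$ since $X,Y$ disjoint forces $k \leqslant n/2$, so $\log(en/k) \leqslant \log(n/k) + 1 \leqslant (1 + 1/\log 2)\log(n/k) < 3\log(n/k)$, hence $2k\log(en/k) < 6k\log(n/k) \leqslant \frac{6}{100}k^2 p < \frac{k^2p}{8}$, leaving an exponent bounded above by $-\frac{k^2 p}{16} \to \infty$ as long as $k^2 p \to \infty$, which again follows from $kp \geqslant 100\log(n/k) \geqslant 100 \log 2$ and $k \to \infty$; if $k$ is bounded the statement is trivial or vacuous).

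There is essentially no main obstacle here — the argument is a textbook first-moment computation. The only point requiring a little care is bookkeeping the constants: one must check that the constant $100$ in the hypothesis $kp \geqslant 100\log(n/k)$ is large enough to beat the entropy term $\binom{n}{k}^2$ after applying Chernoff with deviation $1/2$; the slack is large (we only need roughly $16$), so this is immediate. I would also make the trivial remark that we may assume $k \leqslant n/2$ since $X$ and $Y$ are disjoint subsets of an $n$-set, which keeps $\log(n/k)$ bounded away from zero and hence keeps the right-hand side of the hypothesis positive. Assembling these estimates yields that a.a.s. $e_G(X,Y) \geqslant k^2 p/2$ simultaneously for all such pairs, as required.
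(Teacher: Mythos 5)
Your proof is correct and follows essentially the same route as the paper: fix a pair $(X,Y)$, note $e_G(X,Y)\sim\mathrm{Bin}(k^2,p)$, apply Theorem~\ref{Chernoff}(i) with $a=1/2$ to get $\mathbb{P}[e_G(X,Y)<k^2p/2]<e^{-k^2p/8}$, and take a union bound over the at most $\binom{n}{k}^2$ pairs, using the hypothesis $kp\geqslant 100\log(n/k)$ (together with $k\leqslant n/2$) to show the resulting expression is $o(1)$. You are just more explicit than the paper about the constant bookkeeping and about why the hypothesis forces $k\to\infty$.
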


\begin{proof}
Let $X, Y \subseteq V(G)$ be arbitrary disjoint sets of size $|X| = |Y| = k$. Then $e_G(X, Y) \sim \text{Bin}(k^2, p)$ and thus
$$
\mathbb{P}\left[e_G(X,Y) < k^2 p/2 \right] = \mathbb{P}\left[e_G(X,Y) < \mathbb{E}(e_G(X,Y))/2 \right] < e^{- k^2 p/8},
$$
where the last inequality holds by Theorem~\ref{Chernoff}(i). 

A union bound over all choices of $X$ and $Y$ of size $k$ then gives
\begin{align*}
& \mathbb{P}\left[\exists X, Y \subseteq V(G) \text{ such that } |X| = |Y| = k, \, X \cap Y = \emptyset, \text{ and } e_G(X,Y) < k^2 p/2 \right]\\
&\leqslant \binom{n}{k}^2 \cdot e^{- k^2 p/8} \leqslant \left[\left(\frac{e n}{k} \right)^2 \cdot e^{- k p/8} \right]^k = \left[ \exp \{2 + 2 \log (n/k) - k p/8\} \right]^k = o(1), 
\end{align*}
where the last equality holds by our assumption on $k$.
\end{proof}

\begin{lemma} \label{sizeofR}
Let $c > 0$ be a constant and let $G \sim \mathcal{G}(n,p)$, where $p = c/n$. Then, a.a.s. $e(G) \leqslant c n$.
\end{lemma}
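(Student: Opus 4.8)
The statement is that if $G \sim \mathcal{G}(n,p)$ with $p = c/n$, then a.a.s. $e(G) \leqslant cn$. Since $e(G) \sim \text{Bin}\left(\binom{n}{2}, p\right)$, we have $\mathbb{E}[e(G)] = \binom{n}{2} \cdot \frac{c}{n} = \frac{c(n-1)}{2} < \frac{cn}{2}$. So $cn$ is (asymptotically) twice the expectation, and the plan is simply to apply a concentration inequality. The cleanest route is Chernoff's bound, Theorem~\ref{Chernoff}(ii): writing $X = e(G) \sim \text{Bin}(N, p)$ with $N = \binom{n}{2}$ and $\mu = Np = \frac{c(n-1)}{2}$, we want $\mathbb{P}[X > cn]$. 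Since $cn \geqslant (1 + a)\mu$ for $a = \frac{cn}{\mu} - 1 = \frac{2n}{n-1} - 1 = \frac{n+1}{n-1} > 1$ for all $n \geqslant 2$, the stated form of Chernoff (which requires $0 < a < 1$) does not apply directly.

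To get around this, I would instead apply Theorem~\ref{Chernoff}(ii) with a value of $a$ strictly between $0$ and $1$ to a weaker target and absorb the gap. For instance, take $a = 1/2$: then $\mathbb{P}\left[X > \tfrac{3}{2}\mu\right] < \exp\left(-\tfrac{\mu}{12}\right)$, and since $\tfrac{3}{2}\mu = \tfrac{3c(n-1)}{4} < cn$ for all $n$, we get $\mathbb{P}[e(G) > cn] \leqslant \mathbb{P}\left[e(G) > \tfrac{3}{2}\mu\right] < \exp\left(-\tfrac{c(n-1)}{24}\right) = o(1)$. This finishes the proof. Alternatively, and perhaps more in the spirit of keeping the write-up short, one can simply invoke Chebyshev (Theorem~\ref{Chebyshev}): $\text{Var}[e(G)] = Np(1-p) \leqslant \mu < cn/2$, so with $k = cn - \mu > cn/2 - 1 = \Theta(n)$ one gets $\mathbb{P}[e(G) > cn] \leqslant \mathbb{P}[|e(G) - \mu| \geqslant k] \leqslant \frac{\text{Var}[e(G)]}{k^2} = O(1/n) = o(1)$.

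There is essentially no obstacle here; the only subtlety is purely cosmetic, namely that the constant in front of $\mu$ is $2 + o(1) > 2$, so one cannot plug $a$ directly into the quoted one-sided Chernoff bound and must either use a smaller $a$ on a weaker event (as above) or fall back on Chebyshev. Either way the bound decays polynomially (or exponentially) in $n$, which is more than enough. I would write this up in two or three lines using the Chebyshev route, since it requires no case-checking on the range of $a$.
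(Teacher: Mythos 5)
Your Chernoff route is correct and is exactly the approach taken in the paper: it too bounds $\mathbb{P}[e(G) > cn]$ by $\mathbb{P}\bigl[e(G) > 1.5\binom{n}{2}p\bigr]$ and then applies Theorem~\ref{Chernoff}(ii) with $a = 1/2$, obtaining $\exp\{-\binom{n}{2}p/12\} = o(1)$. The Chebyshev alternative you sketch is a valid (if slightly weaker, polynomially rather than exponentially decaying) fallback, but the paper uses the Chernoff version.
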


\begin{proof}
Clearly $e(G) \sim \text{Bin}(\binom{n}{2}, p)$. Hence
\begin{align*}
\mathbb{P}[e(G) > c n] \leqslant \mathbb{P}\left[e(G) > 1.5 \binom{n}{2} p\right] < \exp\left\{- \frac{\binom{n}{2} p}{12}\right\}
\leqslant \exp\left\{- c n/25\right\} = o(1),
\end{align*}
where the second inequality holds by Theorem~\ref{Chernoff}(ii).
\end{proof}

As noted in the introduction, an important part of proving Client's side in Theorem~\ref{CWHamilton}, is to show that a.a.s. the sum $\sum_{v \in V(G)} \left(\frac{q}{q+1} \right)^{d_G(v)}$ is very small, where $G \sim {\mathcal G}(n,p)$. The following lemma will play a key role in this endeavour. 

\begin{lemma} \label{UseOfBinomial}
Let $q$ be a positive integer and let $G \sim \mathcal{G}(n,p)$. For every $0 \leqslant i \leqslant n-1$, let $X_i = |\{u \in V(G) : d_G(u) = i\}|$ and let $\mu_i = \mathbb{E}[X_i]$. Then,
$$
\sum_{i=0}^{n-1} \left(\frac{q}{q+1}\right)^i \mu_i = n \left(1-\frac{p}{q+1}\right)^{n-1}.
$$
\end{lemma}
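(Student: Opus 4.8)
The plan is to compute $\sum_{i=0}^{n-1} \left(\frac{q}{q+1}\right)^i \mu_i$ directly by first writing down an explicit formula for $\mu_i$ and then summing a binomial series. Since $G \sim \mathcal{G}(n,p)$, each vertex $u$ has degree distributed as $\text{Bin}(n-1,p)$, so $\mathbb{P}[d_G(u) = i] = \binom{n-1}{i} p^i (1-p)^{n-1-i}$, and by linearity of expectation $\mu_i = \mathbb{E}[X_i] = n \binom{n-1}{i} p^i (1-p)^{n-1-i}$.

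Substituting this into the sum gives
\begin{align*}
\sum_{i=0}^{n-1} \left(\frac{q}{q+1}\right)^i \mu_i &= n \sum_{i=0}^{n-1} \binom{n-1}{i} \left(\frac{qp}{q+1}\right)^i (1-p)^{n-1-i}.
\end{align*}
The inner sum is exactly the binomial expansion of $\left(\frac{qp}{q+1} + (1-p)\right)^{n-1}$ by the binomial theorem. Simplifying the base, $\frac{qp}{q+1} + 1 - p = 1 - p\left(1 - \frac{q}{q+1}\right) = 1 - \frac{p}{q+1}$, which yields the claimed identity $n\left(1 - \frac{p}{q+1}\right)^{n-1}$.

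There is essentially no obstacle here: the only things needed are the fact that vertex degrees in $\mathcal{G}(n,p)$ are $\text{Bin}(n-1,p)$, linearity of expectation, and the binomial theorem. The one point worth stating carefully is that $\mu_i$ is a genuine expectation (not a probability), so the factor $n$ appears out front, and that the sum over $i$ from $0$ to $n-1$ captures all possible degree values so the binomial expansion is complete (no truncation). I would present the three displayed lines above essentially verbatim as the proof.
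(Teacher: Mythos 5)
Your proof is correct, but it takes a genuinely different route from the paper's. You compute $\mu_i = n\binom{n-1}{i}p^i(1-p)^{n-1-i}$ explicitly (degrees are $\mathrm{Bin}(n-1,p)$, linearity of expectation) and then collapse the sum via the binomial theorem, observing that $\frac{qp}{q+1} + (1-p) = 1 - \frac{p}{q+1}$. The paper instead argues probabilistically: it lets $\tilde{G}\sim\mathcal{G}\bigl(n,\frac{p}{q+1}\bigr)$ and computes the expected number of isolated vertices of $\tilde{G}$ in two ways. Directly, this expectation is $n\bigl(1-\frac{p}{q+1}\bigr)^{n-1}$. Alternatively, $\tilde{G}$ can be generated by first sampling $G\sim\mathcal{G}(n,p)$ and then deleting each edge independently with probability $\frac{q}{q+1}$; conditioning on $d_G(v)=i$, vertex $v$ is isolated in $\tilde{G}$ with probability $\bigl(\frac{q}{q+1}\bigr)^i$, so the same expectation equals $\sum_i \bigl(\frac{q}{q+1}\bigr)^i\mu_i$. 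Your algebraic proof is shorter and entirely self-contained; the paper's coupling proof is slightly more elaborate but supplies a probabilistic interpretation of the quantity $\sum_i \bigl(\frac{q}{q+1}\bigr)^i\mu_i$ (an expected isolated-vertex count after $\frac{1}{q+1}$-thinning), which explains why this expression arises naturally in the Client-Waiter analysis, where Client keeps roughly a $\frac{1}{q+1}$ fraction of the offered edges. Both are valid; yours is the more elementary.
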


\begin{proof}
Let $\tilde{G} \sim \mathcal{G} \left(n,\frac{p}{q+1}\right)$ and let $Y$ denote the number of isolated vertices in $\tilde{G}$. Then,
\begin{equation} \label{eq2}
\mathbb{E}[Y] = n \left(1-\frac{p}{q+1}\right)^{n-1}.
\end{equation}

An alternative way of generating $\tilde{G}$ is by first generating $G \sim \mathcal{G}(n,p)$ and then deleting each edge of $G$ with probability $\frac{q}{q+1}$, independently of all other edges. It is then apparent that, for any $v \in V(G)$ with $d_G(v)=i$, we have
$$
\mathbb{P}[d_{\tilde{G}}(v)=0] = \left(\frac{q}{q+1}\right)^i.
$$
Hence,
\begin{equation} \label{eq3}
\mathbb{E}[Y] = \sum_{i=0}^{n-1} \left(\frac{q}{q+1}\right)^i \mu_i.
\end{equation}
Combining~\eqref{eq2} and~\eqref{eq3} we conclude that
$$
\sum_{i=0}^{n-1} \left(\frac{q}{q+1}\right)^i \mu_i = n \left(1-\frac{p}{q+1}\right)^{n-1}.
$$
\end{proof}

\begin{lemma} \label{UsingChebyshev}
Let $\varepsilon > 0$ be a constant, let $q$ be a positive integer and let $G \sim \mathcal{G}(n,p)$, where $p = (q + 1 - \varepsilon) \log n/n$. For every $0 \leqslant i \leqslant n-1$, let $X_i = |\{u \in V(G) : d_G(u) = i\}|$ and let $\mu_i = \mathbb{E}[X_i]$. If $0 \leqslant k \leqslant 9(q+1-\varepsilon) \log n$ is an integer such that $\mu_k \rightarrow \infty$, then a.a.s. $X_k \geqslant \mu_k/2$. 
\end{lemma}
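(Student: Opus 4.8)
The plan is to apply Chebyshev's inequality (Theorem~\ref{Chebyshev}) to the random variable $X_k$; the crux of the matter is to show that $\text{Var}[X_k] = o(\mu_k^2)$, after which $\mathbb{P}[|X_k - \mu_k| \ge \mu_k/2] \le 4\,\text{Var}[X_k]/\mu_k^2 \to 0$ gives the conclusion (in fact a.a.s.\ $X_k \ge \mu_k/2$).

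First I would write $X_k = \sum_{v \in V(G)} \mathbf{1}_v$, where $\mathbf{1}_v$ is the indicator of the event $\{d_G(v)=k\}$, so that
$$
\text{Var}[X_k] = \sum_{v} \text{Var}[\mathbf{1}_v] + \sum_{u \ne v} \text{Cov}(\mathbf{1}_u, \mathbf{1}_v) \le \mu_k + \sum_{u \ne v} \text{Cov}(\mathbf{1}_u, \mathbf{1}_v),
$$
using $\text{Var}[\mathbf{1}_v] \le \mathbb{P}[d_G(v)=k]$ and $\sum_v \mathbb{P}[d_G(v)=k]=\mu_k$. For distinct $u,v$, conditioning on whether the edge $uv$ is present makes $d_G(u)$ and $d_G(v)$ independent, each being distributed as $1 + \text{Bin}(n-2,p)$ if $uv \in E(G)$ and as $\text{Bin}(n-2,p)$ otherwise. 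Writing $b_j := \mathbb{P}[\text{Bin}(n-2,p)=j]$ (with $b_{-1}:=0$), a direct computation gives $\text{Cov}(\mathbf{1}_u,\mathbf{1}_v) = p(1-p)(b_{k-1}-b_k)^2$; this is exactly the variance of the two-point random variable equal to $b_{k-1}$ with probability $p$ and to $b_k$ with probability $1-p$, whose mean is $\mathbb{P}[d_G(u)=k]$.

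It then remains to show that $n^2 p\,(b_{k-1}-b_k)^2 = o(\mu_k^2)$, and this is where the hypotheses on $p$ and $k$ enter. From $\mu_k = n\big(p\,b_{k-1}+(1-p)b_k\big) \ge (1-p)n\,b_k$ we get $b_k \le 2\mu_k/n$ for $n$ large, and the ratio $b_{k-1}/b_k = \tfrac{k(1-p)}{(n-1-k)p}$ is at most $9+o(1)$ because $p=(q+1-\varepsilon)\log n/n$ while $0 \le k \le 9(q+1-\varepsilon)\log n = o(n)$. Hence $(b_{k-1}-b_k)^2 \le (b_{k-1}+b_k)^2 = O(b_k^2) = O(\mu_k^2/n^2)$, so $\sum_{u \ne v}\text{Cov}(\mathbf{1}_u,\mathbf{1}_v) \le n^2 p\,(b_{k-1}-b_k)^2 = O(p\,\mu_k^2) = o(\mu_k^2)$ since $p \to 0$. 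Combining this with the diagonal bound $\mu_k$ and using $\mu_k \to \infty$ yields $\text{Var}[X_k] \le \mu_k + o(\mu_k^2) = o(\mu_k^2)$, and Chebyshev then completes the argument.

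The main obstacle is controlling the covariance sum: it is essential to keep the difference $b_{k-1}-b_k$ intact rather than bound $b_{k-1}$ and $b_k$ separately, since the crude estimate $\text{Cov}(\mathbf{1}_u,\mathbf{1}_v) \le p\,b_{k-1}^2$ together with $b_{k-1} \le \mu_k/(np)$ only gives the useless bound $O(\mu_k^2/p)$. The reason $(b_{k-1}-b_k)^2$ is small is that for $k = O(\log n)$ the consecutive binomial point probabilities $b_{k-1}$ and $b_k$ are of the same order of magnitude, and it is precisely the restriction $k \le 9(q+1-\varepsilon)\log n$ that guarantees this (for substantially larger $k$ the ratio $b_{k-1}/b_k$ blows up and the argument would break).
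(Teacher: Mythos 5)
Your proof is correct and follows essentially the same route as the paper's: apply Chebyshev to $X_k = \sum_v \mathbf{1}_{\{d_G(v)=k\}}$, bound the diagonal terms by $\mu_k$, and control the covariance by conditioning on the presence of the edge $uv$. The only (cosmetic) difference is that you factor the covariance into the tidy closed form $p(1-p)(b_{k-1}-b_k)^2$ and then control it via the ratio $b_{k-1}/b_k$, whereas the paper expands $\text{Cov}[Y_i,Y_j]$ and normalizes by $\mu_k^2$ directly, arriving at the equivalent bound $\bigl(\tfrac{k}{n-1}\bigr)^2\tfrac{1}{p}+\tfrac{p}{1-p}=o(1)$.
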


\begin{proof}
Since
$$
\mathbb{P}[X_k < \mu_k/2] \leqslant \mathbb{P}[|X_k - \mu_k| \geqslant \mu_k/2] \leqslant \frac{4\text{Var}[X_k]}{\mu_k^2},
$$
where the last inequality holds by Chebyshev's inequality (Theorem~\ref{Chebyshev}), it suffices to show that $\text{Var}[X_k]/\mu_k^2 = o(1)$.

Let $v_1, \ldots, v_n$ denote the vertices of $G$. For every $1 \leqslant i \leqslant n$, let $Y_i$ be the indicator random variable taking the value $1$ if $d_G(v_i) = k$ and $0$ otherwise. Then
$$
\mathbb{E}[Y_i] = \mathbb{P}[Y_i = 1] = \binom{n-1}{k} p^k (1-p)^{n-1-k}.
$$

Moreover, $X_k = \sum_{i=1}^n Y_i$ and thus 
$$
\mu_k = \sum_{i=1}^n \mathbb{E}[Y_i] = n \binom{n-1}{k} p^k (1-p)^{n-1-k}.
$$

For every $1 \leqslant i \leqslant n$ we have
$$
\text{Var}[Y_i] = \mathbb{E}[Y_i^2] - (\mathbb{E}[Y_i])^2 = \mathbb{E}[Y_i] - (\mathbb{E}[Y_i])^2 \leqslant \mathbb{E}[Y_i],
$$
where the second equality holds since $Y_i^2 = Y_i$. Hence
$$
\sum_{i=1}^n \text{Var}[Y_i] \leqslant \sum_{i=1}^n \mathbb{E}[Y_i] = \mu_k.
$$

Fix some $1 \leqslant i \neq j \leqslant n$. Then 
\begin{align*}
\mathbb{E}[Y_i Y_j] &= \mathbb{P}[Y_i Y_j = 1] = \mathbb{P}[(Y_i = 1) \wedge (Y_j = 1)]\\
&= p \left[\binom{n-2}{k-1} p^{k-1} (1-p)^{n-1-k}\right]^2 + (1-p) \left[\binom{n-2}{k} p^k (1-p)^{n-2-k}\right]^2.
\end{align*}

Therefore,
\begin{align*}
\text{Cov}[Y_i, Y_j] &= \mathbb{E}[Y_i Y_j] - \mathbb{E}[Y_i] \mathbb{E}[Y_j]\\
&= p \left[\binom{n-2}{k-1} p^{k-1} (1-p)^{n-1-k}\right]^2 + (1-p) \left[\binom{n-2}{k} p^k (1-p)^{n-2-k}\right]^2\\
&\hspace*{2.25in} - \left[\binom{n-1}{k} p^k (1-p)^{n-1-k}\right]^2\\
&= \left[\binom{n-1}{k} p^k (1-p)^{n-1-k}\right]^2 \left[\left(\frac{k}{n-1}\right)^2 \frac{1}{p} + \left(1 - \frac{k}{n-1}\right)^2 \frac{1}{1-p} - 1\right].
\end{align*}

Hence,
\begin{align} \label{eq8}
\frac{1}{\mu_k^2} \sum_{1 \leqslant i \neq j \leqslant n} \text{Cov}[Y_i, Y_j] &= \frac{n(n-1)}{\mu_k^2} \left[\binom{n-1}{k} p^k (1-p)^{n-1-k}\right]^2 \left[\left(\frac{k}{n-1}\right)^2 \frac{1}{p} + \left(1 - \frac{k}{n-1}\right)^2 \frac{1}{1-p} - 1\right] \notag\\
&= \frac{n-1}{n} \left[\left(\frac{k}{n-1}\right)^2 \frac{1}{p} + \left(1 - \frac{k}{n-1}\right)^2 \frac{1}{1-p} - 1\right] \notag\\
&\leqslant \left(\frac{k}{n-1}\right)^2 \frac{1}{p} + \frac{p}{1-p} \notag\\
&\leqslant \left(\frac{9(q+1-\varepsilon) \log n}{n-1}\right)^2 \frac{n}{(q+1-\varepsilon) \log n} + \frac{(q+1-\varepsilon) \log n}{n - (q+1-\varepsilon) \log n}\notag\\
&\leqslant \frac{82 (q+1) \log n}{n-1} + \frac{2 (q+1) \log n}{n} = o(1).
\end{align}

Moreover, by our assumption on $k$ we have
\begin{equation} \label{eq7}
\frac{1}{\mu_k} = o(1).
\end{equation}

We conclude that
\begin{equation*}
\frac{\text{Var}[X_k]}{\mu_k^2} = \frac{1}{\mu_k^2} \left(\sum_{i=1}^n \text{Var}[Y_i] + \sum_{1 \leqslant i \neq j \leqslant n} \text{Cov}[Y_i, Y_j]\right) \leqslant \frac{1}{\mu_k} + \frac{1}{\mu_k^2} \sum_{1 \leqslant i \neq j \leqslant n} \text{Cov}[Y_i, Y_j] = o(1),
\end{equation*}
where the last equality holds by~\eqref{eq8} and~\eqref{eq7}.
\end{proof}

\begin{lemma}
\label{UpperBoundForUpperSumOfMu}
Let $G \sim \mathcal{G}(n,p)$, where $p = c \log n/n$ for some constant $c > \frac{2}{9\log 3}$. For every $0 \leqslant i \leqslant n-1$, let $X_i = |\{u \in V(G) : d_G(u) = i\}|$ and let $\mu_i = \mathbb{E}[X_i]$. Then 
$$
\sum_{i = 9c \log n}^{n-1} \mu_i = o(1).
$$
\end{lemma}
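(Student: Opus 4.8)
The plan is to bound $\sum_{i=9c\log n}^{n-1}\mu_i$ by a direct first-moment computation, using the explicit formula $\mu_i = n\binom{n-1}{i}p^i(1-p)^{n-1-i}$ and the fact that for $i$ in this range the summands decay geometrically. First I would observe that the ratio of consecutive terms is
$$
\frac{\mu_{i+1}}{\mu_i} = \frac{n-1-i}{i+1}\cdot\frac{p}{1-p} \le \frac{np}{i+1} = \frac{c\log n}{i+1},
$$
so as soon as $i \ge 9c\log n$ we have $\mu_{i+1}/\mu_i \le 1/9 < 1$ (say, for $n$ large). Hence the whole tail sum is at most $\frac{9}{8}\mu_{i_0}$ where $i_0 = \lceil 9c\log n\rceil$, and it suffices to show $\mu_{i_0} = o(1)$.

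For the bound on $\mu_{i_0}$ I would use $\binom{n-1}{i_0} \le (en/i_0)^{i_0}$ and $(1-p)^{n-1-i_0} \le 1$, giving
$$
\mu_{i_0} \le n\left(\frac{enp}{i_0}\right)^{i_0} \le n\left(\frac{ec\log n}{9c\log n}\right)^{i_0} = n\left(\frac{e}{9}\right)^{i_0}.
$$
Taking logarithms, $\log\mu_{i_0} \le \log n + i_0\log(e/9) \le \log n + 9c\log n\cdot\log(e/9) = \log n\,(1 + 9c\log(e/9))$. Since $\log(e/9) = 1 - \log 9 = 1 - 2\log 3$ is negative, the coefficient $1 + 9c(1-2\log 3)$ is negative precisely when $c > \frac{1}{9(2\log 3 - 1)}$. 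A short check shows $\frac{1}{9(2\log3-1)} < \frac{2}{9\log 3}$ (equivalently $2\log 3 - 1 > \tfrac12\log 3$, i.e. $\tfrac32\log 3 > 1$, which holds since $\log 3 > 1$), so under the hypothesis $c > \frac{2}{9\log 3}$ the exponent is a negative multiple of $\log n$, whence $\mu_{i_0} = n^{-\Omega(1)} = o(1)$ and therefore $\sum_{i\ge i_0}\mu_i = o(1)$.

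The only mild subtlety — and the main thing to get right — is matching the constant $\frac{2}{9\log 3}$ in the hypothesis: one must verify that the crude bound $\binom{n-1}{i_0}p^{i_0} \le (enp/i_0)^{i_0}$ with $i_0 \approx 9c\log n$ is tight enough, which it is because dropping the $(1-p)^{n-1-i_0}$ factor costs essentially nothing here (it is $1-o(1)$ raised to a power $\Theta(n)$... actually $(1-p)^n = e^{-(1+o(1))c\log n} = n^{-c(1+o(1))}$, which only helps). Indeed keeping that factor would let one replace $\frac{2}{9\log 3}$ by something slightly smaller, but since the lemma only claims the stated constant, the simple estimate above suffices. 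I would also handle the geometric-tail step cleanly by noting the ratio bound $\mu_{i+1}/\mu_i \le c\log n/(i+1)$ is monotonically decreasing in $i$, so on the whole range $i \ge i_0$ it stays below $1/9$, justifying $\sum_{i\ge i_0}\mu_i \le \mu_{i_0}\sum_{j\ge 0} 9^{-j} = \tfrac98\mu_{i_0}$.
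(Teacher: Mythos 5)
Your proposal is correct and follows essentially the same route as the paper: both drop the $(1-p)^{n-1-i}$ factor, use $\binom{n-1}{i}p^i \le (enp/i)^i$, and exploit the fact that the terms decay once $i \ge 9c\log n$, then verify the exponent is a negative multiple of $\log n$ under the hypothesis on $c$. The only cosmetic difference is that you bound the tail by a geometric series (gaining a constant factor $\tfrac{9}{8}$), while the paper bounds it by the number of terms times the leading term (costing an extra factor of $n$), which is why your argument would in fact work down to the slightly smaller constant $\frac{1}{9(2\log 3 - 1)}$; since the lemma only asserts the weaker threshold, both arguments suffice.
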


\begin{proof}
We first observe that the function $f(i) = (enp/i)^i$ is decreasing for $i \geqslant 9 c \log n$. Indeed, 
\begin{equation} \label{eq::decreasing}
\frac{f(i)}{f(i+1)} = \left(1+\frac{1}{i}\right)^i \cdot \frac{i+1}{enp} > \frac{i+1}{enp} \geqslant \frac{9c \log n}{e c \log n} = \frac{9}{e} > 1 \,.
\end{equation}

Then
\begin{align*}
\sum_{i = 9c \log n}^{n-1} \mu_i &= n \sum_{i = 9c \log n}^{n-1} \binom{n-1}{i} p^i (1-p)^{n-1-i} \leqslant n \sum_{i = 9c \log n}^{n-1} \left(\frac{enp}{i}\right)^i\\
&\leqslant n^2 \left(\frac{e}{9}\right)^{9c\log n} \leqslant \exp\left\{2\log n - 9c \log n \cdot \log 3 \right\} = o(1),
\end{align*}
where the second inequality holds by~\eqref{eq::decreasing} and the last equality follows from our choice of $c$.
\end{proof}

\begin{corollary} \label{UpperBoundOnMaxDegree}
Let $G \sim \mathcal{G}(n,p)$, where $p = c \log n/n$ for some constant $c > \frac{2}{9 \log 3}$. Then, a.a.s. $\Delta(G) \leqslant 9 c \log n$.
\end{corollary}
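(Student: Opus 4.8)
The plan is to derive Corollary~\ref{UpperBoundOnMaxDegree} as an immediate consequence of Lemma~\ref{UpperBoundForUpperSumOfMu}. The event $\Delta(G) > 9c\log n$ is precisely the event that there exists a vertex $u \in V(G)$ with $d_G(u) = i$ for some $i$ in the range $9c\log n \le i \le n-1$; equivalently, $\sum_{i = 9c\log n}^{n-1} X_i \ge 1$, where $X_i$ is defined as in the lemma. Here I should be a little careful about the endpoint: strictly speaking I would take $i$ ranging over integers at least $\lceil 9c\log n\rceil$, but since we are ignoring floors and ceilings throughout the paper this is immaterial.

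The main (and essentially only) step is Markov's inequality. First I would write
$$
\mathbb{P}[\Delta(G) > 9c\log n] = \mathbb{P}\left[\sum_{i = 9c\log n}^{n-1} X_i \ge 1\right] \le \mathbb{E}\left[\sum_{i = 9c\log n}^{n-1} X_i\right] = \sum_{i = 9c\log n}^{n-1} \mu_i,
$$
using linearity of expectation and the notation $\mu_i = \mathbb{E}[X_i]$ from the lemma statement. Since $c > \frac{2}{9\log 3}$ by hypothesis, Lemma~\ref{UpperBoundForUpperSumOfMu} applies verbatim and gives $\sum_{i = 9c\log n}^{n-1} \mu_i = o(1)$. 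Hence $\mathbb{P}[\Delta(G) > 9c\log n] = o(1)$, which is exactly the assertion that a.a.s.\ $\Delta(G) \le 9c\log n$.

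There is essentially no obstacle here: the corollary is a one-line deduction from the preceding lemma via a union bound / first moment argument, and all the real work — bounding the tail sum of expected degree-class sizes using the monotonicity of $(enp/i)^i$ for $i \ge 9c\log n$ — has already been carried out in the proof of Lemma~\ref{UpperBoundForUpperSumOfMu}. The only thing to watch is that the hypothesis on $c$ is inherited unchanged so that the lemma is genuinely applicable, and that the range of summation is interpreted as running over integers $i$ with $i \ge 9c\log n$, consistent with the convention on floors and ceilings.
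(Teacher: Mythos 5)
Your proposal is correct and is essentially the same as the paper's proof: both apply a first-moment (Markov) argument to the count of vertices with degree at least $9c\log n$ and then invoke Lemma~\ref{UpperBoundForUpperSumOfMu} to bound $\sum_{i\ge 9c\log n}\mu_i = o(1)$.
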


\begin{proof}
For every $0 \leqslant i \leqslant n-1$, let $X_i = |\{u \in V(G) : d_G(u) = i\}|$ and let $\mu_i = \mathbb{E}[X_i]$. Then
$$
\mathbb{P}[\Delta(G) \geqslant 9 c \log n] = \mathbb{P}[\exists i \text{ such that } 9c \log n \leqslant i \leqslant n-1 \text{ and } X_i > 0] \leqslant \sum_{i = 9c \log n}^{n-1} \mu_i = o(1), 
$$
where the first inequality follows from Markov's inequality and a union bound, and the last equality follows from Lemma~\ref{UpperBoundForUpperSumOfMu}.
\end{proof}

The following lemma is a fairly standard result in the theory of random graphs; for the sake of completeness we include its proof here.

\begin{lemma} \label{MinDegree}
Let $\varepsilon > 0$ be a constant and let $G \sim \mathcal{G}(n,p)$, where $p \geqslant (1 + \varepsilon) \log n/n$. Then there exists a constant $\gamma = \gamma(\varepsilon) > 0$ such that a.a.s. $\delta(G) \geqslant \gamma \log n$.   
\end{lemma}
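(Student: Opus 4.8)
The plan is to prove that $\mathcal{G}(n,p)$ a.a.s.\ has minimum degree at least $\gamma \log n$ for a suitable small constant $\gamma = \gamma(\varepsilon)$ by a first-moment estimate on the number of vertices of small degree. First I would fix $\gamma = \gamma(\varepsilon) > 0$ small enough that $\gamma (\log(1/\gamma) + 1) < \varepsilon/2$, say; the exact inequality it must satisfy will become clear from the computation below. For a fixed vertex $v$, the probability that $d_G(v) = i$ is $\binom{n-1}{i} p^i (1-p)^{n-1-i}$, and we want to bound $\sum_{i=0}^{\gamma \log n} \binom{n-1}{i} p^i (1-p)^{n-1-i}$ from above.

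The key steps, in order, are: (1) use the standard estimates $\binom{n-1}{i} \le (en/i)^i$ and $(1-p)^{n-1-i} \le e^{-p(n-1-i)} \le e^{-pn(1+o(1))} = n^{-(1+\varepsilon)(1+o(1))}$, valid uniformly for $i \le \gamma \log n$ since then $i = o(n)$; (2) bound the term $(enp/i)^i$ for $i$ in the range $0 \le i \le \gamma \log n$ --- writing $i = \beta \log n$ with $0 \le \beta \le \gamma$, this is $\exp\{\beta \log n (\log(enp/i))\} = \exp\{\beta \log n (1 + \log(c/\beta) + o(1))\}$ where $p = c\log n/n$ with $c \ge 1+\varepsilon$, which for $\beta \le \gamma$ small is at most $\exp\{o(\log n) \cdot \text{something bounded}\} = n^{o(1)}$ (the function $\beta \log(c/\beta)$ is bounded on $(0,\gamma]$, tending to $0$ as $\beta \to 0$); (3) combine to get that each term is at most $n^{-(1+\varepsilon) + \varepsilon/2 + o(1)} = n^{-1 - \varepsilon/2 + o(1)}$, then sum over the at most $\gamma \log n + 1$ values of $i$ and multiply by $n$ (the number of vertices, via a union bound) to conclude that the expected number of vertices of degree at most $\gamma \log n$ is $n \cdot \gamma \log n \cdot n^{-1-\varepsilon/2+o(1)} = o(1)$; (4) apply Markov's inequality to finish.

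The step requiring the most care is step (2): one must check that $(enp/i)^i$ does not blow up for the smallest values of $i$, where $enp/i$ is large. The point is that even though $enp/i \approx en/(\beta)$ can be polynomially large in $n$ when $\beta$ is tiny, the exponent $i = \beta \log n$ is correspondingly small, and the product $i \log(enp/i) = \beta \log n (\log n \cdot (\text{stuff}))$ --- wait, one must be careful here: $\log(enp/i) = \log(ec\log n/\beta) = \log\log n + \log(ec/\beta)$, so $i \log(enp/i) = \beta \log n (\log \log n + O(1))$, which is $o((\log n)^2)$ but could still be $\omega(\log n)$ if $\beta$ is a constant. So in fact I should take $\gamma$ not merely a small constant but let the threshold be, say, $\gamma \log n / \log\log n$ --- however, the lemma as stated promises $\gamma \log n$, so instead the resolution is that we only need the bound for $i$ up to $\gamma \log n$ and the dominant term $n^{-(1+\varepsilon)}$ from $(1-p)^{n-1-i}$ still wins: $\beta \log n \cdot \log\log n \le \gamma (\log n)(\log\log n) = o((\log n)^{1+\delta})$ for any $\delta$, hence is $o(\varepsilon \log n)$ only if... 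Actually the clean fix is to split the range: for $i \le \log n/(\log\log n)^2$ the factor $(enp/i)^i = n^{o(1)}$ easily, and for $\log n/(\log\log n)^2 \le i \le \gamma \log n$ we instead bound $\binom{n-1}{i}p^i \le (enp/i)^i \le (e c \log n \cdot (\log\log n)^2)^{\gamma \log n}= n^{O(\gamma \log\log n)}= n^{o(1)}$ while $(1-p)^{n-1-i} \le n^{-(1+\varepsilon)+o(1)}$ still dominates. Either way the expected count is $o(1)$, completing the proof via Markov's inequality; I expect the bookkeeping in this range-splitting to be the only genuinely fiddly part.
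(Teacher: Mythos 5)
Your overall approach --- a first-moment bound on the number of low-degree vertices followed by Markov's inequality, with $\gamma$ chosen small so that the $\gamma \log(1/\gamma)$-type quantity is beaten by $\varepsilon$ --- is exactly what the paper does, and your initial computation in step (2) is essentially right. But the ``wait'' digression in which you second-guess yourself rests on an arithmetic slip: with $i = \beta\log n$ and $p = c\log n/n$ one has $np/i = c/\beta$, a \emph{bounded} quantity, not $c\log n/\beta$. You appear to have substituted $i = \beta$ rather than $i = \beta\log n$, which conjured a spurious $\log\log n$ factor and sent you chasing a non-problem.

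The range-splitting ``fix'' you propose is therefore unnecessary, and as written it is also incorrect: $n^{O(\gamma\log\log n)}$ is \emph{not} $n^{o(1)}$ for any fixed constant $\gamma>0$ (the exponent tends to infinity), so the claimed domination by $(1-p)^{n-1-i}\le n^{-(1+\varepsilon)+o(1)}$ fails. (The correct upper bound on $enp/i$ in your second range is $ec(\log\log n)^2$, not $ec\log n(\log\log n)^2$, but even after this correction the exponent still grows like $\gamma\log\log\log n\to\infty$, so the argument still does not close.) The clean finish, which is what the paper does, is to first reduce to $p=(1+\varepsilon)\log n/n$ by monotonicity so that $\gamma$ depends only on $\varepsilon$, then observe that $f(i)=(enp/i)^i$ is increasing on $1\le i\le\gamma\log n$ (since $\gamma<1\le np/\log n$), so the entire sum over this range is at most $(\gamma\log n+1)\cdot f(\gamma\log n)=(\gamma\log n+1)\bigl(e(1+\varepsilon)/\gamma\bigr)^{\gamma\log n}=n^{\gamma\log(e(1+\varepsilon)/\gamma)+o(1)}$; choosing $\gamma$ with $\gamma\log(e(1+\varepsilon)/\gamma)<\varepsilon/3$ then makes the expected count $o(1)$. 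This is exactly the calculation your step (2) was pointing toward before the detour.
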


\begin{proof}
By monotonicity, we can assume that $p = (1 + \varepsilon) \log n/n$. Let $0 < \gamma < 1$ be a constant satisfying $\gamma \log (e (1 + \varepsilon)/\gamma) < \varepsilon/3$; such a constant exists since $\lim_{\gamma \rightarrow 0} \gamma \log (1/\gamma) = 0$. We first observe that the function $f(i) = (enp/i)^i$ is increasing for $1 \leqslant i \leqslant \gamma \log n$. Indeed,
\begin{equation} \label{eq::increasing}
\frac{f(i)}{f(i+1)} = \left(1 + \frac{1}{i}\right)^i \cdot \frac{i+1}{enp} \leqslant \frac{i+1}{np} \leqslant \frac{\gamma \log n+1}{(1 + \varepsilon) \log n} < \gamma < 1 \,,
\end{equation}
where the last inequality holds by our choice of $\gamma$. 

Let $X$ be the random variable that counts the number of vertices of degree at most $\gamma \log n$ in $G$. Then,
\begin{align*}
\mathbb{E}[X] &= n \sum_{i=0}^{\gamma \log n} \binom{n-1}{i} p^i (1-p)^{n-1-i} \\
&\leqslant n \sum_{i=0}^{\gamma\log n}\binom{n}{i}p^i\exp\{-p(n-1-i)\}\\
&\leqslant n \exp\{-p(n-1)\}+n\exp\{-p(n-2\gamma\log n)\}\sum_{i=1}^{\gamma\log n}\left(\frac{enp}{i}\right)^i\\
&\leqslant n \exp\left\{-\left(1 + \varepsilon/2\right) \log n \right\} + n \exp\left\{-\left(1 + \varepsilon/2\right) \log n \right\} \cdot \gamma \log n \left(\frac{e (1 + \varepsilon) \log n}{\gamma \log n}\right)^{\gamma \log n}\\
&\leqslant n^{-\varepsilon/2} \left(1 + \exp\left\{\log \gamma + \log \log n + \gamma \log n \log \left(\frac{e(1 + \varepsilon)}{\gamma}\right)\right\} \right) \\
&= o(1),
\end{align*}
where the third inequality holds by~\eqref{eq::increasing} and the last equality follows from our choice of $\gamma$. Using Markov's inequality we conclude that
$$
\mathbb{P}[\delta(G) \leqslant \gamma \log n] = \mathbb{P}[X>0] \leqslant \mathbb{E}[X] = o(1).
$$
\end{proof}

\begin{lemma} \label{BoundBinomialCoefficient}
Let $r > 0$ be a constant and let $G \sim \mathcal{G}(n,p)$, where $p = c \log n/n$ for some constant $c > \frac{2}{9 \log 3}$. Then a.a.s. 
$$
\binom{d_G(v)}{r \log n} \leqslant n^{r(1 + \log (9c) + \log (1/r))},
$$
holds for any vertex $v \in V(G)$.
\end{lemma}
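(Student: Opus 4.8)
The plan is to reduce everything to the upper bound on the maximum degree of $\mathcal{G}(n,p)$ supplied by Corollary~\ref{UpperBoundOnMaxDegree}, together with two entirely elementary facts about binomial coefficients: that $m \mapsto \binom{m}{b}$ is non-decreasing for $m \ge b \ge 0$, and that $\binom{a}{b} \le (ea/b)^b$.

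First I would invoke Corollary~\ref{UpperBoundOnMaxDegree}, which applies since $c > \frac{2}{9\log 3}$, to conclude that a.a.s. $\Delta(G) \le 9c\log n$, and hence $d_G(v) \le 9c\log n$ holds simultaneously for every $v \in V(G)$. From now on we work on this event. Fix a vertex $v \in V(G)$. If $d_G(v) < r\log n$, then $\binom{d_G(v)}{r\log n} = 0$ and the asserted inequality holds trivially, its right-hand side being positive; so we may assume $r\log n \le d_G(v) \le 9c\log n$.

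Using monotonicity of the binomial coefficient in its upper argument and then the estimate $\binom{a}{b}\le(ea/b)^b$, we obtain
$$
\binom{d_G(v)}{r\log n} \le \binom{9c\log n}{r\log n} \le \left(\frac{9ec\log n}{r\log n}\right)^{r\log n} = \left(\frac{9ec}{r}\right)^{r\log n} = \exp\left\{r\log n\left(1 + \log(9c) + \log(1/r)\right)\right\},
$$
where in the last step we used $\log(9ec/r) = \log e + \log(9c) + \log(1/r) = 1 + \log(9c) + \log(1/r)$. Since $\exp\{r\log n \cdot (1 + \log(9c) + \log(1/r))\} = n^{r(1 + \log(9c) + \log(1/r))}$, this is exactly the desired bound, and it holds on an event of probability $1 - o(1)$.

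There is essentially no obstacle here: the argument is a short deterministic computation on top of Corollary~\ref{UpperBoundOnMaxDegree}. The only points worth a remark are that the failure event of Corollary~\ref{UpperBoundOnMaxDegree} is negligible and controls all vertices at once (being a statement about $\Delta(G)$), and that when $r\log n$ is not an integer the binomial coefficient is read with $\lfloor r\log n\rfloor$ in place of $r\log n$, which affects none of the inequalities above.
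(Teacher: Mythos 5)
Your proof is correct and takes essentially the same approach as the paper: both invoke Corollary~\ref{UpperBoundOnMaxDegree} for the a.a.s.\ bound $\Delta(G)\le 9c\log n$ and then apply $\binom{a}{b}\le(ea/b)^b$ (the paper plugs $d_G(v)\le 9c\log n$ directly into that estimate, while you first replace $d_G(v)$ by $9c\log n$ via monotonicity, an immaterial reordering). Your explicit handling of the degenerate case $d_G(v)<r\log n$ and of the integer-rounding issue is slightly more careful than the paper's one-line computation, but the argument is the same.
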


\begin{proof}
Since, by Corollary~\ref{UpperBoundOnMaxDegree}, a.a.s. $\Delta(G) \leqslant 9 c \log n$, it follows that a.a.s. 
\begin{align*}
\binom{d_G(v)}{r \log n} &\leqslant \left(\frac{e \cdot d_G(v)}{r \log n}\right)^{r \log n} \leqslant \left(\frac{e \cdot 9c}{r}\right)^{r \log n} = \exp\left\{r \log n \left(1 + \log (9c) + \log \left(1/r \right)\right)\right\}\\
&= n^{r(1 + \log (9c) + \log (1/r))} \,.
\end{align*}
\end{proof}

\begin{lemma} \label{BoundingTheSum}
Let $\varepsilon > 0$ be a constant, let $q$ be a positive integer and let $G \sim \mathcal{G}(n,p)$, where $p = (q + 1 + \varepsilon) \log n/n$. Then a.a.s.
$$
\sum_{v \in V(G)} \left(\frac{q}{q+1}\right)^{d_G(v)} \leqslant n^{- \varepsilon/(4(q+1))}.
$$
\end{lemma}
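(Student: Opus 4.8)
The plan is to prove this by the first moment method, where the crucial input is the exact evaluation of $\sum_i \left(\frac{q}{q+1}\right)^i \mu_i$ already supplied by Lemma~\ref{UseOfBinomial}; no truncation of the degree sequence is needed.

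First I would set $S := \sum_{v \in V(G)} \left(\frac{q}{q+1}\right)^{d_G(v)} = \sum_{i=0}^{n-1} \left(\frac{q}{q+1}\right)^i X_i$, where $X_i = |\{u \in V(G) : d_G(u) = i\}|$ and $\mu_i = \mathbb{E}[X_i]$ as in the statement of Lemma~\ref{UseOfBinomial}. By linearity of expectation and that lemma,
$$
\mathbb{E}[S] = \sum_{i=0}^{n-1} \left(\frac{q}{q+1}\right)^i \mu_i = n \left(1 - \frac{p}{q+1}\right)^{n-1} .
$$
Using $1 - x \le e^{-x}$ together with $p = (q+1+\varepsilon)\log n / n$, this gives
$$
\mathbb{E}[S] \le n \exp\left\{ - \frac{p(n-1)}{q+1} \right\} = n \exp\left\{ - \frac{(q+1+\varepsilon)(n-1)\log n}{(q+1)n} \right\} .
$$
Since $(n-1)/n = 1 - 1/n$, the exponent equals $-\frac{q+1+\varepsilon}{q+1}\log n + o(1)$, and hence $\mathbb{E}[S] \le n^{1 - (q+1+\varepsilon)/(q+1) + o(1)} = n^{-\varepsilon/(q+1) + o(1)}$. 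In particular, for $n$ sufficiently large we have $\mathbb{E}[S] \le n^{-\varepsilon/(2(q+1))}$.

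Finally, since $S$ is a nonnegative random variable, Markov's inequality yields
$$
\mathbb{P}\left[ S > n^{-\varepsilon/(4(q+1))} \right] \le \mathbb{E}[S] \cdot n^{\varepsilon/(4(q+1))} \le n^{-\varepsilon/(2(q+1)) + \varepsilon/(4(q+1))} = n^{-\varepsilon/(4(q+1))} = o(1) ,
$$
so a.a.s. $S \le n^{-\varepsilon/(4(q+1))}$, as required. Regarding obstacles: there is essentially no difficulty once Lemma~\ref{UseOfBinomial} is available — the "clever" step of identifying $\sum_i (q/(q+1))^i \mu_i$ with the expected number of isolated vertices of a sparsified random graph has been isolated into that lemma. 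The only points requiring a little care are correctly tracking the $(n-1)/n$ correction in the exponent, and leaving enough slack between the exponent $\varepsilon/(q+1)$ coming from the expectation and the target exponent $\varepsilon/(4(q+1))$ so that the $o(1)$ error and the Markov loss can both be absorbed.
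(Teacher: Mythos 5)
Your proof is correct and follows essentially the same route as the paper: compute $\mathbb{E}[S]$ via Lemma~\ref{UseOfBinomial}, bound $(1-p/(q+1))^{n-1}$ using $1-x\le e^{-x}$ to get $\mathbb{E}[S]\le n^{-\varepsilon/(2(q+1))}$, and finish with Markov's inequality against the threshold $n^{-\varepsilon/(4(q+1))}$. The only cosmetic difference is that you express the exponent with a $+o(1)$ correction whereas the paper absorbs the $(n-1)/n$ factor directly into the constant; both are fine.
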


\begin{proof}
For every $0 \leqslant i \leqslant n-1$, let $X_i = |\{u \in V(G) : d_G(u) = i\}|$ and let $\mu_i = \mathbb{E}[X_i]$. Setting
$$
X = \sum_{i=0}^{n-1} \left(\frac{q}{q+1}\right)^i X_i = \sum_{v \in V(G)} \left(\frac{q}{q+1}\right)^{d_G(v)},
$$
it suffices to prove that a.a.s. $X \leqslant n^{-\varepsilon/(4(q+1))}$. Indeed, we have
\begin{align*}
\mathbb{E}[X] &= \sum_{i=0}^{n-1} \left(\frac{q}{q+1}\right)^i \mu_i = n \left(1-\frac{p}{q+1}\right)^{n-1} \leqslant n \exp\left\{- \frac{(q + 1 + \varepsilon) \log n}{(q+1) n} \cdot (n-1) \right\}\\
&\leqslant n \exp\left\{- \left(1 + \frac{\varepsilon}{2(q+1)}\right) \log n \right\} = n^{-\varepsilon/(2(q+1))},
\end{align*}
where the second equality follows from Lemma \ref{UseOfBinomial}. Therefore,
\begin{align*}
\mathbb{P}\left[X \geqslant n^{-\varepsilon/(4(q+1))}\right] &\leqslant n^{\varepsilon/(4(q+1))} \cdot \mathbb{E}[X] \leqslant n^{\varepsilon/(4(q+1)) - \varepsilon/(2(q+1))} = n^{-\varepsilon/(4(q+1))},
\end{align*}
where the first inequality follows from Markov's inequality.
\end{proof}

\begin{lemma} \label{P1}
Let $\varepsilon > 0$ be a constant, let $q$ be a positive integer and let $G \sim \mathcal{G}(n,p)$, where $p = (q + 1 + \varepsilon) \log n/n$. Then there exists a constant $r > 0$ such that the following holds. For every $v \in V(G)$, let $E(v) = \{e \in E(G) : v \in e\}$, let $\mathcal{A}(v) = \{A(v) \subseteq E(v) : |A(v)| = d_G(v) - r \log n\}$ and let $\mathcal{F}_1 = \bigcup_{v \in V(G)} \mathcal{A}(v)$. Then 
$$
\sum_{A \in \mathcal{F}_1} \left(\frac{q}{q+1}\right)^{|A|} = o(1).
$$
\end{lemma}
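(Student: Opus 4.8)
The plan is to bound the sum $\sum_{A \in \mathcal{F}_1} \left(\frac{q}{q+1}\right)^{|A|}$ by splitting it according to which vertex $v$ each set $A \in \mathcal{A}(v)$ belongs to, and then within each vertex, by the degree of that vertex. For a fixed vertex $v$ with $d_G(v) = d$, every $A \in \mathcal{A}(v)$ has size $d - r\log n$, and there are exactly $\binom{d}{r \log n}$ such sets (choosing which $r\log n$ edges of $E(v)$ to omit). Hence the contribution of $v$ to the sum is exactly $\binom{d_G(v)}{r \log n} \left(\frac{q}{q+1}\right)^{d_G(v) - r\log n} = \left(\frac{q+1}{q}\right)^{r \log n} \binom{d_G(v)}{r \log n} \left(\frac{q}{q+1}\right)^{d_G(v)}$. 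Summing over $v$, we get
$$
\sum_{A \in \mathcal{F}_1} \left(\frac{q}{q+1}\right)^{|A|} = \left(\frac{q+1}{q}\right)^{r \log n} \sum_{v \in V(G)} \binom{d_G(v)}{r \log n} \left(\frac{q}{q+1}\right)^{d_G(v)}.
$$

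Next I would control the two factors. The prefactor $\left(\frac{q+1}{q}\right)^{r\log n} = n^{r \log((q+1)/q)}$ is a fixed polynomial power of $n$ whose exponent is proportional to $r$, so it can be made as small a power as we like by choosing $r$ small. For the sum, I would invoke Lemma~\ref{BoundBinomialCoefficient} (applicable since $p = (q+1+\varepsilon)\log n/n = c \log n /n$ with $c = q+1+\varepsilon > \tfrac{2}{9\log 3}$): a.a.s. $\binom{d_G(v)}{r \log n} \le n^{r(1 + \log(9(q+1+\varepsilon)) + \log(1/r))}$ for every vertex $v$ simultaneously. Plugging this uniform bound in and then applying Lemma~\ref{BoundingTheSum}, which gives a.a.s. $\sum_{v} \left(\frac{q}{q+1}\right)^{d_G(v)} \le n^{-\varepsilon/(4(q+1))}$, we obtain that a.a.s.
$$
\sum_{A \in \mathcal{F}_1} \left(\frac{q}{q+1}\right)^{|A|} \le n^{r \log((q+1)/q)} \cdot n^{r(1 + \log(9(q+1+\varepsilon)) + \log(1/r))} \cdot n^{-\varepsilon/(4(q+1))}.
$$

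It then remains to choose $r = r(\varepsilon, q) > 0$ small enough that the combined exponent is negative, i.e. that
$$
r \log((q+1)/q) + r\bigl(1 + \log(9(q+1+\varepsilon))\bigr) + r \log(1/r) < \frac{\varepsilon}{4(q+1)}.
$$
Since $\lim_{r \to 0^+} r \log(1/r) = 0$ and the other terms multiplying $r$ are bounded constants (depending only on $q$ and $\varepsilon$), the left-hand side tends to $0$ as $r \to 0^+$, so such an $r$ exists; fix it. With this choice the exponent is some negative constant, giving $\sum_{A \in \mathcal{F}_1}\left(\frac{q}{q+1}\right)^{|A|} = o(1)$ a.a.s., as required. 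The only mild subtlety — which is really just bookkeeping — is that $\mathcal{F}_1$ may contain fewer than $\binom{d_G(v)}{r\log n}$ distinct sets per vertex if $d_G(v) - r\log n < 0$ for some low-degree $v$, but in that case $\mathcal{A}(v)$ is empty and contributes nothing, so the displayed identity still holds as an upper bound; alternatively one notes a.a.s. $\delta(G) \ge \gamma \log n > r \log n$ by Lemma~\ref{MinDegree} once $r$ is small. There is no real obstacle here: the lemma is essentially an assembly of Lemmas~\ref{BoundBinomialCoefficient} and~\ref{BoundingTheSum} together with the observation that $r \log(1/r) \to 0$.
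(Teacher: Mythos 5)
Your proposal is correct and follows essentially the same route as the paper: group the sum by vertex, observe that each $v$ contributes $\binom{d_G(v)}{r\log n}(q/(q+1))^{d_G(v)-r\log n}$, then bound the binomial coefficient via Lemma~\ref{BoundBinomialCoefficient}, bound $\sum_v (q/(q+1))^{d_G(v)}$ via Lemma~\ref{BoundingTheSum}, and choose $r$ small using $r\log(1/r)\to 0$. The paper also invokes Lemma~\ref{MinDegree} to take $r<\gamma$, which you correctly identify as the fix for the low-degree edge case.
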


\begin{proof}
By Lemma~\ref{MinDegree} there exists a constant $\gamma > 0$ such that $\delta(G) \geqslant \gamma \log n$. Let $0 < r < \gamma$ be a constant satisfying  
$$
r \left(1 + \log(9(q+1+\varepsilon)) + \log\left(1/r \right) + \log\left(\frac{q+1}{q}\right)\right) < \frac{\varepsilon}{4(q+1)} \,.
$$
Such a constant $r$ exists since $\lim_{r \rightarrow 0} r \log (1/r) = 0$. Using this $r$ in the definition of $\mathcal{F}_1$, we obtain 
\begin{align*}
\sum_{A \in \mathcal{F}_1} \left(\frac{q}{q+1}\right)^{|A|} &= \sum_{v \in V(G)} \binom{d_G(v)}{r \log n} \left(\frac{q}{q+1}\right)^{d_G(v) - r \log n}\\
&\leqslant \left(\frac{q+1}{q}\right)^{r \log n} \cdot n^{r(1 + \log(9(q+1+\varepsilon)) + \log(1/r))} \cdot \sum_{v \in V(G)} \left(\frac{q}{q+1}\right)^{d_G(v)}\\
&\leqslant \exp\left\{r \log n \cdot \log\left(\frac{q+1}{q}\right)\right\} \cdot n^{r(1 + \log(9(q+1+\varepsilon)) + \log(1/r)) - \varepsilon/(4(q+1))}\\
&= n^{r \left(1 + \log(9(q+1+\varepsilon)) + \log \left(1/r \right) + \log\left(\frac{q+1}{q}\right)\right) - \varepsilon/(4(q+1))} = o(1),
\end{align*}
where the first inequality follows from Lemma \ref{BoundBinomialCoefficient}, the second inequality follows from Lemma~\ref{BoundingTheSum} and the last equality follows from our choice of $r$.
\end{proof}

\begin{lemma} \label{P2}
Let $\varepsilon > 0$ be a constant, let $q$ be a positive integer and let $G \sim \mathcal{G}(n,p)$, where $p = (q + 1 + \varepsilon) \log n/n$. Then there exists a constant $\lambda > 0$ for which
$$
\sum_{A \in \mathcal{F}_2} \left(\frac{q}{q+1}\right)^{|A|} = o(1),
$$
where $\mathcal{F}_2 = \left\{E_G(X,Y) : X,Y \subseteq V(G), \, |X| = |Y| = \frac{\lambda n\log\log n}{\log n} \text{ and } X \cap Y = \emptyset\right\}$.
\end{lemma}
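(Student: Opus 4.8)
The plan is a direct first moment computation. Let $\lambda$ be any constant with $\lambda > \frac{2(q+1)}{q+1+\eps}$; since $q+1+\eps > q+1$, the right hand side is strictly smaller than $2$, so we may even take $\lambda = 2$. Put $k = \lambda n \log\log n/\log n$, so that $\mathcal{F}_2$ is precisely the collection of edge sets $E_G(X,Y)$ as $X,Y$ range over pairs of disjoint $k$-subsets of $V(G)$. Since $(X,Y) \mapsto E_G(X,Y)$ is onto $\mathcal{F}_2$ and all summands below are nonnegative,
$$
\sum_{A \in \mathcal{F}_2} \left(\frac{q}{q+1}\right)^{|A|} \le \sum_{X,Y} \left(\frac{q}{q+1}\right)^{e_G(X,Y)},
$$
where the right hand sum is over all (ordered) pairs of disjoint $k$-subsets $X, Y \subseteq V(G)$; it therefore suffices to bound the expectation of the right hand side and apply Markov's inequality.

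For a fixed such pair $X, Y$, the $k^2$ potential edges between $X$ and $Y$ are present in $G$ independently, each with probability $p$, and each present edge contributes a factor $\frac{q}{q+1}$; hence
$$
\mathbb{E}\left[\left(\frac{q}{q+1}\right)^{e_G(X,Y)}\right] = \left(1 - p + \frac{pq}{q+1}\right)^{k^2} = \left(1 - \frac{p}{q+1}\right)^{k^2} \le \exp\left(-\frac{pk^2}{q+1}\right).
$$
There are at most $\binom{n}{k}^2 \le (en/k)^{2k}$ choices of $(X,Y)$, so by linearity of expectation,
$$
\mathbb{E}\left[\sum_{A \in \mathcal{F}_2} \left(\frac{q}{q+1}\right)^{|A|}\right] \le \left(\frac{en}{k}\right)^{2k} \exp\left(-\frac{pk^2}{q+1}\right) = \exp\left(2k \log \frac{en}{k} - \frac{pk^2}{q+1}\right).
$$

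To finish I would show the exponent tends to $-\infty$. Substituting $k = \lambda n \log\log n/\log n$ one has $\log(en/k) = (1+o(1))\log\log n$, so $2k \log(en/k) = (2\lambda + o(1))\frac{n(\log\log n)^2}{\log n}$, while with $p = (q+1+\eps)\log n/n$ one has $\frac{pk^2}{q+1} = \frac{\lambda^2(q+1+\eps)}{q+1}\cdot\frac{n(\log\log n)^2}{\log n}$. Hence the exponent equals
$$
\left(2\lambda - \frac{\lambda^2(q+1+\eps)}{q+1} + o(1)\right)\frac{n(\log\log n)^2}{\log n},
$$
and the choice of $\lambda$ makes the bracketed constant negative; since $n(\log\log n)^2/\log n \to \infty$, the exponent tends to $-\infty$ and the expectation is therefore $o(n^{-1})$. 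By Markov's inequality, \aas\ $\sum_{A \in \mathcal{F}_2}(q/(q+1))^{|A|} < n^{-1} = o(1)$, which proves the lemma.

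There is no essential difficulty here; the one point to watch is that $2k\log(en/k)$ and $pk^2/(q+1)$ are of the same order of magnitude $n(\log\log n)^2/\log n$, so the argument reduces to the comparison of the leading constants $2\lambda$ and $\lambda^2(q+1+\eps)/(q+1)$ — and the (negative) edge-counting term dominates exactly because $p$ is above the threshold $(q+1)\log n/n$, which is why the constant $q+1+\eps$ rather than $q+1$ appears. (Alternatively one could invoke Lemma~\ref{EdgesBetweenBigSets} to force $e_G(X,Y) \ge k^2 p/2$ for all relevant pairs and then take a union bound, but this yields a worse range of admissible $\lambda$.)
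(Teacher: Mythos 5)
Your proof is correct, but it takes a genuinely different route from the paper's. The paper first invokes Lemma~\ref{EdgesBetweenBigSets} to deduce that a.a.s.\ \emph{every} pair of disjoint $k$-subsets $X,Y$ (with $k = \lambda n \log\log n/\log n$) satisfies $e_G(X,Y) \ge k^2 p/2$, and then, on that event, bounds the sum deterministically by $\binom{n}{k}^2 \left(\frac{q}{q+1}\right)^{k^2 p/2}$; this requires $\lambda \ge 100$ (so that Lemma~\ref{EdgesBetweenBigSets} applies) and $\lambda\log\frac{q+1}{q} > 2$. You instead run a first-moment argument directly: you compute $\mathbb{E}\left[\left(\frac{q}{q+1}\right)^{e_G(X,Y)}\right] = \left(1 - \frac{p}{q+1}\right)^{k^2}$ via the probability generating function of the binomial, sum over pairs, and finish with Markov's inequality. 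Your approach avoids any appeal to a separate concentration lemma, captures the full distribution of $e_G(X,Y)$ rather than a one-sided Chernoff bound, and consequently admits a much wider range of $\lambda$ (any $\lambda > 2(q+1)/(q+1+\varepsilon)$, e.g.\ $\lambda = 2$), though since the lemma only asserts the existence of some $\lambda > 0$ this is a stylistic rather than a substantive gain. One small point worth noting explicitly (which you implicitly handle by passing to the sum over ordered pairs): the map $(X,Y) \mapsto E_G(X,Y)$ need not be injective, so $\sum_{A \in \mathcal{F}_2}$ is bounded above by, but not equal to, the sum over pairs; your inequality there is in the right direction. The calculation of the exponent $\left(2\lambda - \frac{\lambda^2(q+1+\varepsilon)}{q+1} + o(1)\right)\frac{n(\log\log n)^2}{\log n}$ is correct, and the choice of $\lambda$ makes it tend to $-\infty$.
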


\begin{proof}
Let $\lambda \geqslant 100$ be a constant satisfying $\lambda \log\left(\frac{q+1}{q}\right) > 2$. Then
\begin{align*}
\sum_{A \in \mathcal{F}_2} \left(\frac{q}{q+1}\right)^{|A|} &\leqslant \binom{n}{\frac{\lambda n \log \log n}{\log n}}^2 \left(\frac{q}{q+1}\right)^{\frac{\lambda^2 n(\log \log n)^2}{\log n}} \leqslant \left[\left(\frac{e\log n}{\lambda \log \log n}\right)^2 \left(\frac{q}{q+1}\right)^{\lambda \log \log n}\right]^{\frac{\lambda n \log \log n}{\log n}}\\
&\leqslant \left[\exp\left\{2 \log \log n - \lambda \log \log n \log\left(\frac{q+1}{q}\right) \right\}\right]^{\frac{\lambda n \log \log n}{\log n}} = o(1),
\end{align*}
where the first inequality follows since $q \geqslant 1$ and by Lemma~\ref{EdgesBetweenBigSets} which is applicable since $\lambda \geqslant 100$, and the last equality follows since $\lambda \log\left(\frac{q+1}{q}\right) > 2$ by assumption.
\end{proof}

\section{Expanders and Hamiltonicity} \label{sec::expanders}

In this section, we discuss the well-known relation between expanders and Hamiltonicity. We will make use of this relation in the following two sections where we will prove Theorems~\ref{WCHamilton} and~\ref{CWHamilton}.

\begin{definition}[Expander]
Let $G = (V,E)$ be a graph on $n$ vertices and let $t = t(n)$ and $k = k(n)$. The graph $G$ is called a \emph{$(t, k)$-expander} if $|N_G(U)| \geqslant k |U|$ for every set $U \subseteq V$ of size at most $t$.
\end{definition} 

The following result asserts that typically, for subgraphs of a random graph, large minimum degree is enough to ensure expansion.  

\begin{lemma} \label{MakingAnExpander}
Let $G \sim \mathcal{G}(n,p)$, where $p = c \log n/n$ for some constant $c > 0$, and let $\alpha = \alpha(n)$ and $k = k(n)$ be such that $\lim_{n \rightarrow \infty} \alpha k \log n = \infty$. Then a.a.s. every spanning subgraph $G' \subseteq G$ with minimum degree $\delta(G') \geqslant r \log n$ for some constant $r \geqslant 4 c \alpha (k + 1)^2>0$ is an $\left(\alpha n, k \right)$-expander.
\end{lemma}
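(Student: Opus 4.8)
\textbf{Proof plan for Lemma~\ref{MakingAnExpander}.} The statement asserts that, a.a.s.\ over the choice of $G \sim \mathcal{G}(n,p)$, \emph{every} spanning subgraph $G'$ of $G$ with $\delta(G') \geqslant r \log n$ is an $(\alpha n, k)$-expander. Since the quantifier over $G'$ is awkward to handle directly, the plan is to replace the expansion failure event by a statement purely about $G$ itself. Suppose some $G' \subseteq G$ with the required minimum degree fails to expand: then there is a set $U \subseteq V(G)$ with $|U| \leqslant \alpha n$ and $|N_{G'}(U)| < k |U|$. Put $W = U \cup N_{G'}(U)$, so $|W| < (k+1)|U| \leqslant (k+1)\alpha n$. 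Every vertex of $U$ has all of its at least $r \log n$ neighbours in $G'$ — and hence in $G$ — lying inside $W$. Therefore $G[W]$ contains at least $\tfrac{1}{2} r |U| \log n$ edges (summing degrees of $U$-vertices and dividing by $2$, since each contributes degree at least $r \log n$ within $W$). So it suffices to show that a.a.s.\ $G$ contains \emph{no} set $W$ of size at most $(k+1)\alpha n$ with a subset $U$, $|U| \geqslant |W|/(k+1)$, spanning this many edges inside $W$; in fact it is cleanest to simply bound $e_G(W)$ from below by $\tfrac{1}{2} r |U| \log n \geqslant \tfrac{r \log n}{2(k+1)} |W|$.

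The second step is to invoke Lemma~\ref{NumberOfEdgesInSmallSets} with the parameter choice $t = (k+1)\alpha$ (so that sets $W$ of size at most $tn = (k+1)\alpha n$ are covered), noting that $\lim_{n\to\infty} t \log n = \lim_{n\to\infty} (k+1)\alpha \log n = \infty$ by hypothesis, so the lemma applies. It yields that a.a.s.\ $e_G(W) \leqslant 2 c t |W| \log n = 2 c (k+1) \alpha |W| \log n$ for every such $W$. To derive a contradiction with the lower bound $e_G(W) \geqslant \tfrac{r \log n}{2(k+1)} |W|$, I need
\[
\frac{r \log n}{2(k+1)} \, |W| \;>\; 2 c (k+1) \alpha |W| \log n,
\]
which (dividing by $|W| \log n$) is exactly $r > 4 c \alpha (k+1)^2$ — precisely the hypothesis on $r$. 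This contradiction shows that on the a.a.s.\ event furnished by Lemma~\ref{NumberOfEdgesInSmallSets}, no such $G'$ and $U$ can exist, i.e.\ every qualifying $G'$ is an $(\alpha n, k)$-expander.

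One small point to get right: the bound from Lemma~\ref{NumberOfEdgesInSmallSets} requires $1 \leqslant |W|$, which is automatic, and $|W| \leqslant tn$; here $|W| < (k+1)\alpha n = tn$, so we are safely in range (and if one worries about whether $|U|$ could be so small that $|W|$ rounds down, the inequalities are strict and absorb this). I would also remark that we do not actually need $|U| \geqslant |W|/(k+1)$ as a separate hypothesis in the counting lemma — it is only used to translate the edge count $\tfrac12 r |U|\log n$ into a bound in terms of $|W|$, via $|U| > |W|/(k+1)$. The main (and really only) obstacle is the bookkeeping in this translation from ``$G'$ fails to expand'' to ``$G$ has a dense small set'': one must be careful that the neighbourhood is taken in $G'$ (for the expansion failure) but the edges counted lie in $G$ (for Lemma~\ref{NumberOfEdgesInSmallSets}), and that the minimum-degree hypothesis on $G'$ is what forces all those edges into the small set $W$. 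Once the set-up is phrased correctly, the rest is the single inequality $r > 4c\alpha(k+1)^2$, which the hypothesis hands us directly.
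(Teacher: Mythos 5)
Your proof is correct and takes essentially the same route as the paper: suppose a set $U$ with $|U| \leqslant \alpha n$ fails to expand in some $G'$, bound $e_G(U \cup N_{G'}(U)) \geqslant \tfrac{1}{2} r |U| \log n$ from the minimum degree of $G'$, then apply Lemma~\ref{NumberOfEdgesInSmallSets} with $t=(k+1)\alpha$ to get a contradictory upper bound. One tiny remark: the hypothesis is $r \geqslant 4c\alpha(k+1)^2$, not $r > 4c\alpha(k+1)^2$, but this causes no trouble since the strict inequality $|W| < (k+1)|U|$ already supplies strictness in the chain.
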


\begin{proof}
Suppose for a contradiction that there exists a set $A \subseteq V(G)$ of size $1 \leqslant |A| \leqslant \alpha n$ and a spanning subgraph $G'\subseteq G$ such that $|N_{G'}(A)| < k |A|$. Then, $|A \cup N_{G'}(A)| < (k + 1) |A| \leqslant (k + 1) \alpha n$. It thus follows by Lemma~\ref{NumberOfEdgesInSmallSets} that a.a.s. 
\begin{equation} \label{eq::fewEdges}
e_{G'}(A \cup N_{G'}(A)) \leqslant 2 c (k + 1) \alpha |A \cup N_{G'}(A)| \log n < 2 c (k + 1)^2 \alpha |A| \log n \leqslant r |A| \log n/2.
\end{equation} 

On the other hand, since $\delta(G') \geqslant r \log n$, we have
$$
e_{G'}(A \cup N_{G'}(A)) \geqslant r |A| \log n/2   
$$
which clearly contradicts~\eqref{eq::fewEdges}. We conclude that $G'$ is indeed an $\left(\alpha n, k \right)$-expander. 
\end{proof}

\begin{definition}[Booster] \label{def::booster}
A non-edge $uv$ of a graph $G$, where $u, v \in V(G)$, is called a \emph{booster with respect to $G$} if $G \cup \{uv\}$ is Hamiltonian or its longest path is strictly longer than that of $G$. We denote the set of boosters with respect to $G$ by ${\mathcal B}_G$. 
\end{definition}

The following lemma (see, e.g.,~\cite{FK}), which is essentially due to P\'osa~\cite{Posa1976}, asserts that expanders have many boosters. 

\begin{lemma} \label{Posa}
If $G$ is a connected non-Hamiltonian $(t, 2)$-expander, then $|{\mathcal B}_G| \geqslant (t+1)^2/2$. 
\end{lemma}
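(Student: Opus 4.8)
The plan is to use the classical rotation-extension technique of P\'osa. Since $G$ is connected and non-Hamiltonian, let $P = v_0 v_1 \cdots v_\ell$ be a longest path in $G$; it is not a Hamilton path need not be assumed, but by maximality neither $v_0$ nor $v_\ell$ has a neighbour outside $P$ (otherwise $P$ could be extended). For a fixed endpoint, say $v_0$, consider the set $\com(P)$ of all endpoints reachable from $v_0$ by a sequence of \emph{rotations} with $v_0$ kept fixed: if $v_\ell v_i \in E(G)$ for some $1 \le i \le \ell - 2$, then $P' = v_0 v_1 \cdots v_i v_\ell v_{\ell - 1} \cdots v_{i+1}$ is another longest path with the same vertex set, and $v_{i+1}$ becomes a new endpoint. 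Let $R$ be the set of all vertices that arise as the ``new'' endpoint through some sequence of such rotations. The key structural fact, which I would prove as the heart of the argument, is the standard P\'osa property: for every $u \in R$, all neighbours of $u$ in $G$ lie in $R \cup N_G(R)$; equivalently $N_G(R) \cup R$ is exactly the ``boundary'' of $R$ along $P$, and more precisely $|N_G(R)| \le 2|R|$. (This is because each vertex of $N_G(R)$ must, for each $u \in R$ that it ``blocks'', be one of the at most two path-neighbours of an appropriate vertex.)

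Now I would combine this with the expansion hypothesis. Suppose first $|R| \le t$. Then by the $(t,2)$-expander property $|N_G(R)| \ge 2|R|$, so in fact $|N_G(R)| = 2|R|$ and equality forces $R$ to have size at least... more usefully, I instead run the rotation argument \emph{twice}: for each endpoint $u$ of a longest path obtained after the first batch of rotations (fixing the other endpoint), perform a second round of rotations at the other end. This yields sets $S_0$ (endpoints obtainable at the $v_0$-end) and, for each $x \in S_0$, a set $S_1(x)$ of endpoints obtainable at the other end while $x$ is fixed. The P\'osa property gives $|N_G(S_0)| \le 2 |S_0|$, and if $|S_0| \le t$ then expansion forces $|N_G(S_0)| \ge 2|S_0|$; pushing this one step further (the standard trick: if $|S_0| \le t$ then $|S_0 \cup N_G(S_0)| \ge 3|S_0|$, contradicting $|N_G(S_0)| \le 2|S_0|$ unless $S_0$ is empty, which it is not) we conclude $|S_0| \ge t + 1$. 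The same reasoning applied to the second endpoint shows $|S_1(x)| \ge t+1$ for each $x \in S_0$.

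Finally, count boosters. For every pair $(x, y)$ with $x \in S_0$ and $y \in S_1(x)$ such that $xy \notin E(G)$, there is a longest path of $G$ with endpoints $x$ and $y$; hence adding the edge $xy$ either creates a longer path (if $xy$ closes nothing useful) or, crucially, if $xy$ \emph{were} an edge it would close a cycle $C$ on $V(P)$ of the same length as $P$ — and then, since $G$ is connected and $P$ is not Hamilton, some vertex off $C$ is joined to $C$, producing a path longer than $P$, contradicting maximality; so in the non-Hamiltonian case every such pair is a non-edge and $xy$ is a booster. (If $G$ has a longest path that is actually a Hamilton path, then adding any such $xy$ closes a Hamilton cycle, again a booster.) Thus $|\mathcal{B}_G| \ge \frac{1}{2} |S_0| \cdot (t+1) \ge \frac{1}{2} (t+1)^2$, where the factor $1/2$ accounts for unordered pairs; a small amount of care is needed because $S_1(x)$ depends on $x$, but since each $|S_1(x)| \ge t+1$ and $|S_0| \ge t+1$, the number of booster pairs is at least $(t+1)^2 / 2$ as claimed.

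The main obstacle is proving the P\'osa property $|N_G(R)| \le 2|R|$ cleanly — tracking which vertices can appear as endpoints after iterated rotations and showing the neighbourhood of the endpoint-set is controlled — together with the bookkeeping needed to turn ``many endpoint pairs'' into ``many \emph{distinct} boosters.'' The interaction with the $(t,2)$-expansion to boost $|S_0|$ from ``$\le t$ leads to contradiction'' to ``$\ge t+1$'' is routine once the P\'osa property is in hand.
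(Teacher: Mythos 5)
The paper does not prove this lemma but cites Frieze--Krivelevich~\cite{FK} and P\'osa~\cite{Posa1976}; your proposal reconstructs the standard rotation--extension argument that those sources use, and the overall architecture is the right one: rotate at one end to build a large endpoint set $S_0$, rotate at the other end for each $x\in S_0$ to build $S_1(x)$, observe that every resulting endpoint pair is a non-edge whose addition closes a spanning cycle (hence either makes $G$ Hamiltonian or, via connectivity, produces a longer path), and divide by two for unordered pairs. The non-edge/booster justification and the double-counting are handled correctly.

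There is, however, a genuine gap in the step meant to establish $|S_0|\ge t+1$. You state the P\'osa property in the weak form $|N_G(R)|\le 2|R|$. Combined with the $(t,2)$-expansion bound $|N_G(S_0)|\ge 2|S_0|$ (valid while $|S_0|\le t$), this yields only equality, not a contradiction; and the line ``$|S_0\cup N_G(S_0)|\ge 3|S_0|$, contradicting $|N_G(S_0)|\le 2|S_0|$'' is not a contradiction, because $S_0$ and $N_G(S_0)$ are disjoint, so $|S_0\cup N_G(S_0)|\ge 3|S_0|$ is literally the same statement as $|N_G(S_0)|\ge 2|S_0|$. What you need is the sharp P\'osa bound $|N_G(R)|\le 2|R|-1$: every vertex of $N_G(R)$ is a path-neighbour (along $P$) of some vertex of $R$, and the far endpoint $v_\ell\in R$ has only one path-neighbour, so the two-per-vertex count loses at least one. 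With the strict version, $|S_0|\le t$ gives $2|S_0|\le |N_G(S_0)|\le 2|S_0|-1$, a genuine contradiction, whence $|S_0|\ge t+1$; the same fix applies verbatim to each $S_1(x)$. Once this is corrected, the remainder of your argument goes through and matches the standard proof.
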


Using P\'osa's lemma, we will show that, if a sparse subgraph of a random graph is a good expander, then it has many boosters in the random graph.

\begin{lemma} \label{Prop4}
Let $\varepsilon, s_1 < 1$ and $s_2 < 1/100$ be positive constants and let $G \sim \mathcal{G}(n,p)$, where $p = (1 + \varepsilon) \log n/n$. If $s_1 (1 - \log s_1) < 1/400$, then a.a.s. every connected non-Hamiltonian $(n/5, 2)$-expander $\Gamma \subseteq G$ with at most $s_1 n \log n$ edges has at least $s_2 n \log n$ boosters in $G$.
\end{lemma}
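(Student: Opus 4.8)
The plan is to bound the expected number of $(n/5,2)$-expanders $\Gamma\subseteq G$ that are connected, non-Hamiltonian, have at most $s_1 n\log n$ edges, and have fewer than $s_2 n\log n$ boosters in $G$, and to show this expectation is $o(1)$. By P\'osa's lemma (Lemma~\ref{Posa}), any such $\Gamma$ satisfies $|{\mathcal B}_\Gamma|\geqslant (n/5+1)^2/2 \geqslant n^2/50$, so if $\Gamma$ has fewer than $s_2 n\log n$ boosters \emph{in $G$}, then the set $B:={\mathcal B}_\Gamma \setminus E(G)$ of boosters of $\Gamma$ missing from $G$ has size at least $n^2/50 - s_2 n\log n \geqslant n^2/51$. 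Crucially, the edges of $B$ are non-edges of $G$, so their absence is an event independent of which edges $\Gamma$ uses; this is the standard trick that makes the first-moment bound work.

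First I would set up the union bound as a sum over choices of an edge set $E(\Gamma)$ of size $m$, for each $s_1 n\log n \geqslant m$, times the number of ways to specify such a ``bad'' configuration, weighted by the probability. More precisely, fix $m \leqslant s_1 n\log n$. The number of graphs $\Gamma$ on vertex set $[n]$ with exactly $m$ edges is at most $\binom{\binom n2}{m} \leqslant (n^2/m)^m \cdot e^m \leqslant (en^2/m)^m$. For $\Gamma$ to be a subgraph of $G$ we pay a factor $p^m$. Given that $\Gamma$ has the bad property, there is a set $B$ of at least $n^2/51$ non-edges of $G$; since these are disjoint from $E(\Gamma)$, the probability that all of $B$ is absent from $G$ is $(1-p)^{|B|} \leqslant (1-p)^{n^2/51} \leqslant \exp\{-pn^2/51\} = \exp\{-(1+\varepsilon)n\log n/51\}$. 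There is no additional union-bound cost for $B$ itself: we only need the \emph{existence} of such a $B$, and then the probability that $G$ avoids \emph{every} edge of the specific worst-case set. (Slightly more carefully: condition on $E(\Gamma)\subseteq E(G)$ and on $\Gamma$ being a connected non-Hamiltonian $(n/5,2)$-expander, then the conditional probability that $G$ has fewer than $s_2 n\log n$ boosters for $\Gamma$ is at most the probability that at least $n^2/51$ specific non-edges are all absent, which is $\exp\{-\Omega(n\log n)\}$.)

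Putting the pieces together, the expected number of bad $\Gamma$ is at most
\begin{align*}
\sum_{m=1}^{s_1 n\log n} \left(\frac{en^2}{m}\right)^m p^m \exp\left\{-\frac{(1+\varepsilon)n\log n}{51}\right\}
&\leqslant n\log n \cdot \max_{m \leqslant s_1 n\log n}\left(\frac{en^2 p}{m}\right)^m \cdot \exp\left\{-\frac{n\log n}{51}\right\}.
\end{align*}
Since $p = (1+\varepsilon)\log n/n$, we have $en^2 p/m = e(1+\varepsilon)n\log n/m$, and the function $(en^2p/m)^m$ is increasing in $m$ on the relevant range (its logarithmic derivative in $m$ is $\log(en^2p/m) - 1 = \log(n^2p/m) > 0$ once $m \leqslant s_1 n\log n < n^2 p$), so it is maximized at $m = s_1 n\log n$, giving $\left(\frac{e(1+\varepsilon)}{s_1}\right)^{s_1 n\log n} = \exp\{s_1(1 + \log((1+\varepsilon)/s_1)) n\log n\} \leqslant \exp\{2 s_1(1 - \log s_1) n\log n\}$ for $n$ large (absorbing $\log(1+\varepsilon)$ into the constant). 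By the hypothesis $s_1(1-\log s_1) < 1/400$, the exponent is at most $n\log n/200 - n\log n/51 + o(n\log n) < 0$, so the whole sum is $\exp\{-\Omega(n\log n)\} = o(1)$, and Markov's inequality finishes the proof.

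The main obstacle to watch is the bookkeeping in the first-moment step: one must be careful that the ``bad event for $\Gamma$'' (few boosters in $G$) is handled by conditioning on $E(\Gamma)\subseteq E(G)$ and then using only the remaining randomness on non-edges, so that the booster-deficiency probability $\exp\{-\Omega(n\log n)\}$ genuinely multiplies the count $(en^2p/m)^m$ of candidate expanders without any further union-bound loss. The only quantitative input needed is that $n^2/50$ dominates $s_2 n\log n$ (true since $s_2 < 1/100$ and $n\log n = o(n^2)$) so that $|B| = \Omega(n^2)$, and the arithmetic comparison $2s_1(1-\log s_1) < 1/51$, which follows from $s_1(1 - \log s_1) < 1/400$. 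Everything else is routine estimation using $\binom{a}{b}\leqslant (ea/b)^b$ and $1-p\leqslant e^{-p}$.
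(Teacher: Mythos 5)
Your overall plan is the same as the paper's: a first--moment union bound over candidate expanders $\Gamma$, using P\'osa's lemma for $|\mathcal{B}_\Gamma|\geqslant n^2/50$ and the fact that, since $\mathcal{B}_\Gamma$ consists of non-edges of $\Gamma$, the indicator variables of the boosters lying in $G$ are independent of the event $E(\Gamma)\subseteq E(G)$. The count $\binom{\binom{n}{2}}{m}p^m$ is also the same.

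However, your key probability estimate is wrong. You claim that, conditioned on $E(\Gamma)\subseteq E(G)$, the probability that $\Gamma$ has fewer than $s_2n\log n$ boosters in $G$ is at most $(1-p)^{n^2/51}$, on the grounds that the set $B=\mathcal{B}_\Gamma\setminus E(G)$ has size at least $n^2/51$ and ``there is no additional union-bound cost for $B$''. This is circular: $B$ is a \emph{random} set determined by $G$, so ``the probability that all of $B$ is absent'' is tautologically $1$, not $(1-p)^{|B|}$. What you actually need to bound is $\mathbb{P}[X_\Gamma<s_2n\log n]$ where $X_\Gamma=|\mathcal{B}_\Gamma\cap E(G)|\sim\mathrm{Bin}(|\mathcal{B}_\Gamma|,p)$, and this probability is \emph{not} at most $(1-p)^{n^2/51}$: already the single term $k=s_2n\log n-1$ of $\sum_{k<s_2n\log n}\binom{|\mathcal{B}_\Gamma|}{k}p^k(1-p)^{|\mathcal{B}_\Gamma|-k}$ exceeds your claimed bound by a factor $\exp\{\Theta(n\log n)\}$, because $\binom{|\mathcal{B}_\Gamma|}{k}p^k$ contributes $\exp\{\Omega(n\log n)\}$ when $|\mathcal{B}_\Gamma|$ is of order $n^2$ and $k$ is of order $n\log n$. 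In general $\mathbb{P}[\mathrm{Bin}(N,p)<t]\leqslant(1-p)^{N-t}$ is simply false for $t$ not too small; the binomial coefficient cannot be discarded.

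The paper instead applies a straight Chernoff lower--tail bound: with $a=1-50s_2/(1+\varepsilon)$ (positive since $s_2<1/100$), $\mathbb{P}[X_\Gamma<s_2n\log n]<\exp\{-(1-50s_2/(1+\varepsilon))^2n^2p/100\}\leqslant\exp\{-n\log n/400\}$, which accounts for the binomial factor automatically. Note that this correct rate $1/400$ is much smaller than your claimed $1/51$, so after fixing the probability step you would also have to tighten your entropy count: your bound $\left(e(1+\varepsilon)/s_1\right)^{s_1n\log n}\leqslant\exp\{2s_1(1-\log s_1)n\log n\}$ gives away factors of $2$ and $\log(1+\varepsilon)$ that are fatal against the $1/400$ rate (one would need $2s_1(1-\log s_1)<1/400$, which the hypothesis does not guarantee). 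The paper avoids this by using $\varepsilon<1$ to write $\binom{n}{2}p<n\log n$, yielding exactly $\exp\{s_1(1-\log s_1)n\log n\}$, which the hypothesis $s_1(1-\log s_1)<1/400$ was calibrated to beat.
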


\begin{proof}
For a connected non-Hamiltonian $(n/5, 2)$-expander $\Gamma \subseteq G$ with at most $s_1 n \log n$ edges, let $X_\Gamma = |\mathcal{B}_\Gamma \cap E(G)|$. Then $X_\Gamma \sim \text{Bin}(|\mathcal{B}_\Gamma|, p)$ and, by Lemma~\ref{Posa}, $|\mathcal{B}_\Gamma| \geqslant n^2/50$. Therefore,
\begin{align*}
\mathbb{P}[X_\Gamma < s_2 n \log n] < \exp\left\{- \left(1 - \frac{50s_2}{1 + \varepsilon}\right)^2 n^2 p/100\right\} \leqslant \exp\left\{- n \log n/400\right\}, 
\end{align*}
where the first inequality follows from Theorem~\ref{Chernoff}(i) with $a = 1 - 50 s_2/(1 + \varepsilon)$ and the last inequality holds since $s_2 \leqslant 1/100$ and $\varepsilon > 0$.

Taking a union bound over all spanning subgraphs of $G$ which are connected non-Hamiltonian $(n/5, 2)$-expanders with at most $s_1 n \log n$ edges, we conclude that the probability that there exists such a subgraph with less than $s_2 n \log n$ boosters in $G$ is at most
\begin{align*}
& \sum_{m=1}^{s_1 n \log n} \binom{\binom{n}{2}}{m} p^m \cdot \exp\left\{- n \log n/400\right\} \leqslant \exp\left\{- n \log n/400\right\} \cdot \sum_{m=1}^{s_1 n \log n} \left(\frac{e n \log n}{m} \right)^m \\  
&\leqslant \exp\left\{- n \log n/400\right\} \cdot s_1 n \log n \cdot \left(\frac{e}{s_1} \right)^{s_1 n \log n} \\ 
&\leqslant \exp\left\{2 \log n + s_1 n \log n (1 - \log s_1) - n \log n/400\right\} = o(1),  
\end{align*}
where the first inequality holds since $\varepsilon < 1$, the second inequality holds since $f(m) = (e n \log n/m)^m$ is increasing for $1 \leqslant m \leqslant s_1 n \log n$ as can be shown by a calculation similar to~\eqref{eq::increasing}, and the last equality holds since $s_1 (1 - \log s_1) < 1/400$ by assumption. 
\end{proof}

We end this section by recalling a sufficient condition for Hamiltonicity from~\cite{Hefetz2009b}; it is based on expansion and high connectivity. 

\begin{theorem} [\cite{Hefetz2009b}] \label{HamCriteria}
Let $12 \leqslant d \leqslant e^{\sqrt[3]{\log n}}$ and let $G$ be a graph on $n$ vertices which satisfies the following two properties.
\begin{description}
\item[P1] For every $S \subseteq V(G)$, if $|S| \leqslant \frac{n \log \log n \log d}{d \log n \log \log \log n}$, then $|N_G(S)| \geqslant d|S|$;
\item[P2] There exists an edge in $G$ between any two disjoint subsets $A, B \subseteq V(G)$ of size $|A|, |B| \geqslant \frac{n \log \log n \log d}{4130 \log n \log \log \log n}$.
\end{description}
Then $G$ is Hamiltonian for sufficiently large $n$.
\end{theorem}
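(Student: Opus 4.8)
The plan is to run P\'osa's rotation--extension method, using \textbf{P1} to keep the sets of reachable path endpoints large and \textbf{P2} to close a longest path into a Hamilton cycle. Throughout, write $t_0=\frac{n\log\log n\log d}{d\log n\log\log\log n}$ for the threshold in \textbf{P1} and $t_1=\frac{n\log\log n\log d}{4130\log n\log\log\log n}=dt_0/4130$ for the one in \textbf{P2}; note $t_0$ may be much smaller than $t_1$ (when $d>4130$), but $dt_0/6>t_1$ always, and the hypothesis $12\le d\le e^{\sqrt[3]{\log n}}$ guarantees $t_0\ge 1$, $dt_0\to\infty$, and $t_1=o(n)$ (the last point, needed so that two disjoint vertex sets of size $>t_1$ can exist, is exactly where the upper bound on $d$ is used).

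\emph{Step 1: $G$ is connected.} If $C$ is a connected component with $|C|\le t_0$ then $N_G(C)=\emptyset$, contradicting \textbf{P1} (which forces $|N_G(C)|\ge d|C|\ge 1$); so every component has more than $t_0$ vertices. Applying \textbf{P1} to a subset $S\subseteq C$ of size $\lfloor t_0\rfloor$, and noting $N_G(S)\subseteq C$, we get $|C|\ge d\lfloor t_0\rfloor\ge dt_0/2>t_1$. Two components each of size $>t_1$ would violate \textbf{P2}, so $G$ is connected.

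\emph{Step 2: rotations and closing up.} Suppose for contradiction that $G$ is not Hamiltonian, and let $P$ be a longest path in $G$. Since $P$ is longest, neither endpoint of $P$ has a neighbour outside $V(P)$, and no longest path of $G$ can have adjacent endpoints: such an edge would create a cycle on $|V(P)|$ vertices, which (being non-Hamiltonian and $G$ connected) would extend along a crossing edge to a path on $|V(P)|+1$ vertices. Now run P\'osa's rotation--extension analysis, in the spirit of Lemma~\ref{Posa}: starting from $P$ one produces a family of longest paths together with disjoint sets $A,B\subseteq V(G)$ such that for every $a\in A$, $b\in B$ some longest path of $G$ has endpoints $a$ and $b$. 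The size estimate comes from the fact that if $U$ is the set of endpoints reachable by rotations with the other end fixed then $|N_G(U)|<3|U|$; hence $|U|>t_0$ is forced (otherwise \textbf{P1} and $d\ge 12$ give $|N_G(U)|\ge 12|U|$), and truncating $U$ to a subset of size $\lfloor t_0\rfloor$ and expanding via \textbf{P1} yields $|N_G(U)|\ge d\lfloor t_0\rfloor$, whence $|U|>dt_0/6>t_1$. So $|A|,|B|>t_1$, and by the observation above there is no edge of $G$ between $A$ and $B$ --- contradicting \textbf{P2}. Therefore $G$ is Hamiltonian.

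\emph{The main obstacle} is the bookkeeping hidden inside Step 2: rotating the two ends of $P$ independently only shows each end \emph{separately} ranges over a set of size $>t_1$, not that the two large sets can be realised \emph{simultaneously} as the endpoint pairs of longest paths (the ``cross''/product structure that \textbf{P2} can bite on). Producing this requires iterating rotations in alternating fashion --- fix one end, rotate the other to enlarge a set of reachable endpoints; then fix an endpoint from that set and rotate back --- while carefully tracking that every intermediate path remains a longest path, that an endpoint never acquires a neighbour off its current path (else we extend and contradict maximality of $P$), and that at each stage the set of reachable endpoints expands by the factor furnished by \textbf{P1}. The slack $d\ge 12$ against the P\'osa constant $3$ is precisely what makes each expansion step succeed, and the gap between $dt_0/6$ and $t_1=dt_0/4130$ is what lets \textbf{P2} finish even when $d$ is as large as $e^{\sqrt[3]{\log n}}$ (so that $t_0$ itself is tiny).
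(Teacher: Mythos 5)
This theorem is only cited from [Hefetz2009b]; the present paper contains no proof of it, so there is nothing in-paper to compare against. Judged on its own, your Step~1 (connectivity) is fine, but Step~2 has a genuine gap that your closing ``main obstacle'' paragraph names but does not resolve. A single round of P\'osa rotations gives you a set $A$ of endpoints reachable with one end $y_0$ fixed, with $|N(A)|<2|A|$ and hence $|A|>dt_0/3>t_1$ via \textbf{P1}; and for each $a\in A$ a set $B_a$ of endpoints reachable with $a$ fixed, each of size $>t_1$. What you actually know is only the ``union-of-stars'' fact that $a$ has no neighbour in $B_a$. To invoke \textbf{P2} you need a \emph{single} pair of disjoint sets $A',B'$, each of size at least $t_1$, such that $a'b'\notin E(G)$ for every $a'\in A'$, $b'\in B'$. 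The sets $B_a$ can differ wildly as $a$ ranges over $A$, so no fixed large $B'$ is produced, and you cannot pass from the non-edges $\{a\}\times B_a$ to a non-edge biclique $A'\times B'$ of the required size: the density of these non-edge pairs is far too low (roughly $dt_0/n$, which is $o(1)$) for any K\H{o}v\'ari--S\'os--Tur\'an-type extraction. Your proposed remedy of ``alternating rotations'' is a restatement of the difficulty, not an argument; fixing a new endpoint and rotating back does not enlarge or stabilize the earlier set.

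This is not a cosmetic omission. The peculiar shape of the threshold $t_0=\frac{n\log\log n\,\log d}{d\log n\,\log\log\log n}$ and the constant $4130$ are strong evidence that the proof in [Hefetz2009b] is \emph{not} a one-shot rotation--extension argument: the $\log\log n/\log\log\log n$ ratio reflects an iterated construction (one builds up long paths/near-Hamilton structures in stages, gaining a factor per stage), and \textbf{P2} is used against the carefully-engineered pair of sets that this iteration produces. A single application of P\'osa's lemma, which is all your sketch really deploys, would naturally yield a cleaner threshold like $n/d$ with a small absolute constant; it cannot account for those iterated logarithms. So while your overall scaffolding (connectivity, rotation-extension, boosters are non-edges, contradiction with \textbf{P2}) points in the right general direction, the heart of the proof --- constructing the two large mutually non-adjacent endpoint sets --- is missing, and I do not see how to repair it within the single-rotation framework you propose.
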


\section{The Waiter-Client Hamiltonicity game} \label{sec::mainWC}

\noindent \emph{Proof of Theorem~\ref{WCHamilton}}. Let $\varepsilon > 0$ be a constant. For $p = (1 - \varepsilon) \log n/n$, it is well-known (see, e.g.,~\cite{BollobasBook, Janson2000, FriezeKaronski}) that a.a.s. $G \sim \mathcal{G}(n,p)$ has an isolated vertex and therefore is not Hamiltonian. Hence, a.a.s. Client wins the $(1 : q)$ Waiter-Client Hamiltonicity game on $E(G)$ regardless of his strategy.    

Assume then that $G \sim \mathcal{G}(n,p)$, where $p = (1 + \varepsilon) \log n/n$ for some constant $\varepsilon > 0$. We present a strategy for Waiter to win the $(1 : q)$ Waiter-Client Hamiltonicity game on $E(G)$ and then prove that a.a.s. he can play according to this strategy. Waiter's strategy consists of the following four stages (the constants $\bar{c}, c_1$ and $c_2$ appearing in the description of the strategy will be determined later).

\paragraph{Stage 0:} Waiter splits $G$ into two spanning subgraphs, the main graph $G_M$ and a reservoir graph $R$, by placing each edge of $G$ in $R$ independently with probability $\bar{p} = \bar{c}/\log n$, and then setting $E(G_M) = E(G) \setminus E(R)$.

\paragraph{Stage 1:} By only offering edges from $G_M$ and following the strategy given by Lemma 3.4, Waiter forces Client to build a $(c_1 n, 2)$-expander $G_1$ with at most $c_2 n \log n$ edges.

\paragraph{Stage 2:} By only offering the edges of $R$ and following the strategy given by Theorem 3.1, Waiter forces Client to build a graph $G_2$ such that $G_1 \cup G_2$ is an $(n/5, 2)$-expander.

\paragraph{Stage 3:} For as long as $G_C$ is not Hamiltonian, in each round Waiter offers Client $q+1$ free boosters with respect to $G_C$. Once $G_C$ becomes Hamiltonian, Waiter plays arbitrarily for the remainder of the game.

\vspace{0.1in}

It is evident from the description of Stage $3$ of the proposed strategy that, if Waiter is able to play according to this strategy, then he wins the game. Moreover, it is clear that Waiter can follow Stage $0$ of the strategy. It thus remains to prove that he can follow Stages 1--3 as well. We consider each stage in turn.

\paragraph{Stage 1:} We first observe that 
$$
p (1 - \bar{p}) = (1 + \varepsilon)(\log n - \bar{c})/n \geqslant (1 + \varepsilon/2) \log n/n ,
$$ 
and that $G_M \sim \mathcal{G}(n, p (1 - \bar{p}))$. It then follows from Lemma~\ref{MinDegree} that a.a.s. $\delta(G_M) \geqslant \gamma \log n$ for some constant $\gamma > 0$. Let $0 < c_2 < 1/(600(q+1))$ be a constant satisfying $\lfloor \gamma \log n/(2(q+1)) \rfloor \geqslant c_2 \log n$ and $3 c_2 (1 - \log (3 c_2)) < 1/400$. By Lemma~\ref{lem::largeMinDeg}, Waiter has a strategy to force Client to build a spanning subgraph $G_1$ of $G_M$ with minimum degree $\delta(G_1) \geqslant c_2 \log n$, by offering at most $(q + 1) c_2 n \log n$ edges of $G_M$; in particular, $e(G_1) \leqslant c_2 n \log n$. Finally, it follows by Lemma~\ref{MakingAnExpander} that $G_1$ is a $(c_1 n, 2)$-expander, for a sufficiently small constant $c_1 > 0$.     

\paragraph{Stage 2:} Let $\mathcal{F} = \{E_R(X, Y) : X, Y \subseteq V(G), \, |X| = |Y| = c_1 n \text{ and } X \cap Y = \emptyset\}$. Since $R \sim \mathcal{G}(n, p \bar{p})$ and $p \bar{p} = (1 + \varepsilon) \bar{c}/n$, we have
\begin{align*}
\sum_{A \in \mathcal{F}} 2^{- |A|/(2q-1)} &\leqslant \binom{n}{c_1 n}^2 2^{- 0.5 c_1^2 \bar{c}(1 + \varepsilon)n/(2q-1)}\\
&\leqslant \left(\frac{e}{c_1}\right)^{2 c_1 n} 2^{- c_1^2 \bar{c} n/(4q)}\\
&= \exp\left\{2 c_1 n \left(1 - \log c_1 \right) - \frac{c_1^2 \bar{c} n \log 2}{4q}\right\}\\
& = o(1),
\end{align*}
where the first inequality follows from Lemma~\ref{EdgesBetweenBigSets} which is applicable for a sufficiently large constant $\bar{c}$, and the last equality holds for sufficiently large $\bar{c}$. Hence, by Theorem~\ref{Bednarska}, and since all edges of $R$ are free at the beginning of Stage $2$, Waiter has a strategy to force Client to claim an edge of $R$ between every pair of disjoint sets of vertices of $G$, each of size $c_1 n$.

Let $G_2$ denote the graph built by Client in Stage $2$. We claim that $G_1 \cup G_2$ is an $(n/5, 2)$-expander. Since $G_1$ is a $(c_1 n, 2)$-expander and expansion is a monotone increasing property, it suffices to demonstrate expansion for sets $A \subseteq V(G)$ of size $c_1 n \leqslant |A| \leqslant n/5$. Suppose for a contradiction that $A \subseteq V(G)$ is a set of size $c_1 n \leqslant |A| \leqslant n/5$ and yet $|N_{G_1 \cup G_2}(A)| < 2 |A|$. Then $|V(G) \setminus (A \cup N_{G_1 \cup G_2}(A))| > n - 3 |A| \geqslant 2n/5 \geqslant c_1 n$ and there are no edges of $G_1 \cup G_2$ between $A$ and $V(G) \setminus (A \cup N_{G_1 \cup G_2}(A))$. This contradicts the way $G_2$ was constructed. We conclude that $G_1 \cup G_2$ is indeed an $(n/5, 2)$-expander at the end of Stage $2$.

\paragraph{Stage 3:} Observe that, at the end of Stage $2$, Client's graph $G_C$ is connected. Indeed, since $G_1 \cup G_2$ is an $(n/5, 2)$-expander, each of its connected components must have size at least $3n/5$ and thus there can be only one such component. It follows that, at the beginning of Stage $3$, Client's graph is a connected $(n/5, 2)$-expander. Since connectivity and expansion are monotone increasing properties, this remains true for the remainder of the game. We will show that this allows Waiter to offer Client $q+1$ free boosters in every round of Stage $3$ until $G_C$ becomes Hamiltonian. 

It is evident from Definition~\ref{def::booster} that one needs to sequentially add at most $n$ boosters to an $n$-vertex graph to make it Hamiltonian. Hence, in order to prove that Waiter can follow Stage $3$ of the proposed strategy, it suffices to show that, for every $1 \leqslant i \leqslant n$, if $G_C$ is not Hamiltonian at the beginning of the $i$th round of Stage $3$, then $|{\mathcal B}_{G_C} \cap E(G_F)| \geqslant q + 1$ holds at this point. By the description of Stage $1$ we have $e(G_1) \leqslant c_2 n \log n$ and by the description of Stage $2$ we have $e(G_2) \leqslant e(R) \leqslant (1 + \varepsilon) \bar{c} n$, where the last inequality holds a.a.s. by Lemma~\ref{sizeofR}. Hence, a.a.s. $e(G_1 \cup G_2) \leqslant 2 c_2 n \log n$. 

Fix an integer $1 \leqslant i \leqslant n$ and suppose that $G_C$ is not Hamiltonian at the beginning of the $i$th round of Stage $3$. Then $G_C$ is a connected, non-Hamiltonian $(n/5, 2)$-expander with at most $2 c_2 n \log n + (i-1) \leqslant 3 c_2 n \log n$ edges. Since, moreover, $c_2$ was chosen such that $3 c_2 (1 - \log (3 c_2)) < 1/400$, it follows by Lemma~\ref{Prop4} that $|{\mathcal B}_{G_C} \cap E(G)| \geqslant n \log n/200$. We conclude that
$$
|{\mathcal B}_{G_C} \cap E(G_F)| \geqslant |{\mathcal B}_{G_C} \cap E(G)| - (e(G_C) + e(G_W)) \geqslant n \log n/200 - 3 c_2 (q+1) n \log n \geqslant q+1,
$$ 
where the last inequality holds since $c_2 < 1/(600(q+1))$ by assumption.
{\hfill $\Box$ \medskip\\}

\section{The Client-Waiter Hamiltonicity game} \label{sec::mainCW}

\noindent \emph{Proof of Theorem~\ref{CWHamilton}}. Assume first that $G \sim {\mathcal G}(n,p)$, where $p = (q + 1 + \varepsilon) \log n/n$ for some positive constant $\varepsilon$. We will present a strategy for Client for the $(1 : q)$ Client-Waiter Hamiltonicity game on $E(G)$; it is based on the sufficient condition for Hamiltonicity from Theorem~\ref{HamCriteria}. Let $r$ and $\mathcal{F}_1$ be as in Lemma~\ref{P1} and let $\lambda$ and $\mathcal{F}_2$ be as in Lemma~\ref{P2}. Note that $\sum_{A \in \mathcal{F}_1} \left(\frac{q}{q+1}\right)^{|A|} = o(1)$ holds by Lemma~\ref{P1} and that $\sum_{A \in \mathcal{F}_2} \left(\frac{q}{q+1}\right)^{|A|} = o(1)$ holds by Lemma~\ref{P2}. Let $\mathcal{F} = \mathcal{F}_1 \cup \mathcal{F}_2$. Then
$$
\sum_{A \in \mathcal{F}} \left(\frac{q}{q+1}\right)^{|A|} = \sum_{A \in \mathcal{F}_1} \left(\frac{q}{q+1}\right)^{|A|} + \sum_{A \in \mathcal{F}_2} \left(\frac{q}{q+1}\right)^{|A|} = o(1).
$$    
It thus follows by Theorem~\ref{ClientWinningTransversalGame} that Client has a winning strategy for the $(1 : q)$ Client-Waiter game $(E(G), \mathcal{F}^*)$. 

We claim that if Client follows this strategy, then his graph at the end of the game satisfies properties {\bf P1} and {\bf P2} from Theorem~\ref{HamCriteria}, with $d = (\log n)^{1/3}$, and is therefore Hamiltonian. Indeed, it follows from the definition of $\mathcal{F}_1$ that, at the end of the game, the minimum degree in Client's graph will be at least $r \log n$. Using Lemma~\ref{MakingAnExpander}, it is then easy to verify that Client's graph is an $(n/\log n, (\log n)^{1/3})$-expander and thus satisfies property {\bf P1}. Moreover, a straightforward calculation shows that, by the definition of $\mathcal{F}_2$, at the end of the game, Client's graph will satisfy property {\bf P2} as well.   

\bigskip   
  
Next, assume that $G \sim {\mathcal G}(n,p)$, where $p = (q + 1 - \varepsilon) \log n/n$ for some positive constant $\varepsilon$. We will present a strategy for Waiter to isolate a vertex in Client's graph.

\vspace*{-4mm}

\paragraph{Waiter's strategy:} Let $k$ be a positive integer and let $I_k$ be an independent set in $G$ such that $|I_k| \geqslant 2 (q+1)^{k+1}/q^k$ and $d_G(u) = k$ for every $u \in I_k$. For every $u \in I_k$, let $E(u) = \{e \in E(G) : u \in e\}$ and let $X = \bigcup_{u \in I_k} E(u)$. Waiter isolates a vertex of $I_k$ in Client's graph by following the strategy for the $(1 : q)$ box game on $\{E(u) : u \in I_k\}$ which is described in the proof of Proposition~\ref{BoxGame}. 

Since $|I_k| \geqslant 2 (q+1)^{k+1}/q^k$, it follows by Proposition~\ref{BoxGame} that Waiter can indeed isolate a vertex in Client's graph. Hence, it remains to prove that Waiter can play according to the proposed strategy. In order to do so, it suffices to show that a.a.s. a positive integer $k$ and an independent set $I_k$ as above exist.  

For every $0 \leqslant i \leqslant n-1$, let $X_i = |\{u \in V(G) : d_G(u) = i\}|$ and let $\mu_i = \mathbb{E}[X_i]$. Then
\begin{align} \label{eq::largeSum}
\sum_{i=0}^{n-1} \left(\frac{q}{q+1}\right)^i \mu_i &= n \left(1 - \frac{p}{q+1}\right)^{n-1} \geqslant n \exp\left\{-\frac{n-1}{n} \left(\frac{(q+1-\varepsilon) \log n}{q+1} + \frac{(q+1-\varepsilon)^2 \log^2 n}{n(q+1)^2}\right)\right\} \nonumber\\
&\geqslant n \exp\left\{-\left(1 - \frac{\varepsilon}{2(q+1)}\right) \log n\right\} \geqslant n^{\delta} ,
\end{align}
where the first equality holds by Lemma~\ref{UseOfBinomial}, the first inequality follows from the fact that $e^{-(x+x^2)} \leqslant 1-x$ holds for sufficiently small $x > 0$ by the Taylor expansion of $e^{-y}$, and the last inequality holds for a sufficiently small constant $\delta > 0$. Since, moreover, 
$$
\sum_{i = 9(q+1-\varepsilon) \log n}^{n-1} \left(\frac{q}{q+1}\right)^i \mu_i = o(1),
$$
holds by Lemma~\ref{UpperBoundForUpperSumOfMu}, it follows from~\eqref{eq::largeSum} that
\begin{align*}
\sum_{i=0}^{9(q+1-\varepsilon)\log n} \left(\frac{q}{q+1}\right)^i \mu_i &\geqslant n^{\delta}/2.
\end{align*}

Hence, there exists an integer $0 \leqslant k \leqslant 9(q+1-\varepsilon) \log n$ such that
$$
\left(\frac{q}{q+1}\right)^k \mu_k \geqslant \frac{n^{\delta}}{18 (q+1) \log n}.
$$
In particular, $\mu_k \rightarrow \infty$ as $n \rightarrow \infty$ holds for this value of $k$ and thus, by Lemma~\ref{UsingChebyshev}, a.a.s $X_k \geqslant \mu_k/2$. It follows that a.a.s. 
\begin{equation} \label{eq::largeXk}
\frac{q^k}{2(q+1)^{k+1}} \cdot X_k \geqslant \frac{q^k}{2(q+1)^{k+1}} \cdot \frac{\mu_k}{2} \geqslant \frac{n^{\delta}}{72 (q+1)^2 \log n}.
\end{equation}

Let $S_k = \{u \in V(G) : d_G(u) = k\}$ and let $I_k \subseteq S_k$ be an independent set of maximum size. It is easy to see that
$$
|I_k| \geqslant \frac{|S_k|}{k+1} = \frac{X_k}{k+1} \geqslant \frac{n^{\delta}}{72 (k+1) (q+1)^2 \log n} \cdot \frac{2(q+1)^{k+1}}{q^k} \geqslant \frac{2(q+1)^{k+1}}{q^k},  
$$
where the second inequality holds by~\eqref{eq::largeXk} and the last inequality holds for sufficiently large $n$ since $k \leqslant 9 (q+1-\varepsilon) \log n$.   
{\hfill$\Box$\medskip\\}

\section{Concluding remarks and open problems}
\label{sec::openprob}

In this paper, we determined sharp thresholds for the $(1 : q)$ Waiter-Client and Client-Waiter Hamiltonicity games, played on the edge set of the random graph ${\mathcal G}(n,p)$, for every fixed $q$. For the Waiter-Client version, it is $\log n/n$; in particular it does not depend on $q$. This is asymptotically the same as the sharp threshold for the appearance of a Hamilton cycle in ${\mathcal G}(n,p)$. On the other hand, the sharp threshold for the Client-Waiter Hamiltonicity game on ${\mathcal G}(n,p)$ is $(q+1) \log n/n$ and thus does grow with $q$. It is natural to study the behaviour of these thresholds for non-constant values of $q$ as well. As noted in the introduction, for Maker-Breaker and Avoider-Enforcer games, this was done in~\cite{FGKN} for every value of $q$ for which Maker (respectively Enforcer) has a winning strategy for the $(1 : q)$ Maker-Breaker (respectively Avoider-Enforcer) Hamiltonicity game on $K_n$. It was proved in~\cite{BHKL} that the largest $q$ for which Waiter has a winning strategy in the $(1 : q)$ Waiter-Client Hamiltonicity game on $K_n$ is of linear order. Moreover, using a similar argument to the one employed in~\cite{KS}, it is not hard to show that the largest $q$ for which Client has a winning strategy in the $(1 : q)$ Client-Waiter Hamiltonicity game on $K_n$ is $(1 - o(1)) n/\log n$. It would be interesting to determine the threshold probabilities for the graph property ${\mathcal W}_{\mathcal H}^q$ for every $q = O(n)$ and for the graph property ${\mathcal C}_{\mathcal H}^q$ for every $q \leqslant (1 - o(1)) n/\log n$.

\section*{Acknowledgement}
We would like to thank the anonymous referees for their helpful comments.

\bibliographystyle{amsplain}

\end{document}